\numberwithin{equation}{section}
\newtheorem{theorem}{Theorem}[section]
\newtheorem{lem}{Lemma}[section]
\newtheorem{rem}{Remark}[section]
\newtheorem{prop}{Proposition}[section]
\newtheorem{cor}{Corollary}[section]
\newcounter{hypA}
\newenvironment{hypA}{\refstepcounter{hypA}\begin{itemize}
  \item[({\bf A\arabic{hypA}})]}{\end{itemize}}
\newcounter{hypB}
\newcounter{hypH}
\newenvironment{hypH}{\refstepcounter{hypH}\begin{itemize}
 \item[({\bf H\arabic{hypH}})]}{\end{itemize}}
\date{}
\newcommand{\EE}{\mathbb{E}}
\newcommand{\be}{\begin{equation}}
\newcommand{\ee}{\end{equation}}
\begin{document}

%+Title
\begin{center}

{\Large \textbf{On the Particle Approximation of Lagged Feynman-Kac Formulae}}

\vspace{0.5cm}

	BY  ELSIDDIG AWADELKARIM$^{1}$,  MICHEL CAFFAREL$^{2}$,  PIERRE DEL MORAL$^{3}$ \& AJAY JASRA$^{4}$

{\footnotesize $^{1}$Applied Mathematics and Computational Science Program,  Computer, Electrical and Mathematical Sciences and Engineering Division, King Abdullah University of Science and Technology, Thuwal, 23955-6900, KSA.}\\
	{\footnotesize $^{2}$University of Toulouse,  CNRS-Lab. de Chimie et Physique Quantiques, Toulouse,  31062, FR.}\\
{\footnotesize $^{3}$Centre de Recherche Inria Bordeaux Sud-Ouest, Talence, 33405, FR.}\\
{\footnotesize $^{4}$School of Data Science,  The Chinese University Hong Kong, Shenzhen, CN.}\\
{\footnotesize E-Mail:\,} \texttt{\emph{\footnotesize elsiddigawadelkarim.elsiddig@kaust.edu.sa;
caffarel@irsamc.ups-tlse.fr; \\ pierre.del-moral@inria.fr;
ajayjasra@cuhk.edu.cn }}

\end{center}

\begin{abstract}
	In this paper we examine the numerical approximation of the limiting invariant measure associated with Feynman-Kac formulae.  
These are expressed in a discrete time formulation and are associated with a Markov chain and a potential function.
The typical application considered here is the computation of eigenvalues associated with non-negative operators 
as found, for example, in physics or particle simulation of rare-events. We focus on a novel \emph{lagged} approximation of this invariant measure, 
based upon the introduction of a ratio of time-averaged Feynman-Kac marginals associated with 
a positive operator iterated $l \in\mathbb{N}$ times; a lagged Feynman-Kac formula. This estimator and its approximation using Diffusion Monte Carlo (DMC) 
	have been extensively employed in the physics literature.  
In short, DMC is an iterative algorithm involving $N\in\mathbb{N}$ particles or walkers 
simulated in parallel, that undergo sampling and resampling operations. 
In this work, 
it is shown that 
for the DMC approximation of the lagged Feynman-Kac formula, 
one has an almost sure characterization of the $\mathbb{L}_1$-error as the time parameter (iteration) goes to
infinity 
and this
is at most of $\mathcal{O}(\exp\{-\kappa l\}/N)$, for $\kappa>0$.  
In addition a non-asymptotic in time,  and time uniform $\mathbb{L}_1-$bound is proved 
which is $\mathcal{O}(l/\sqrt{N})$.
We also prove a novel central limit theorem to give a characterization of the exact
asymptotic in time variance.
	This analysis demonstrates that the strategy used in physics,  namely, to run DMC with $N$ and $l$ small and, for long time enough,   is mathematically justified.  
 Our results also suggest how one should choose $N$ and $l$ in practice.
We emphasize that these results are not restricted to physical applications; they have broad relevance to the general problem 
of particle simulation of the Feynman-Kac formula, which is utilized in a great variety of scientific and engineering fields.
\\
\noindent \textbf{Keywords}: Feynman-Kac Formula, Diffusion Monte Carlo, Eigenvalue approximation.
\end{abstract}

\section{Introduction}
The Feynman-Kac (FK) formula plays a central role in many scientific fields including mathematics \cite{beskos1},  statistics \cite{ddj06}, physics \cite{caffarel_1988,Simon},
control theory \cite{delmoral_2000},  quantitative finance \cite{huyen_2009}, to name some of the main ones, see for instance \cite{delm,delm:13} for 
book-length introductions to the subject. In the context of this work, the Feynman-Kac formula can be formulated in the following way. 
Consider a time-homogeneous Markov
chain with initial distribution $\eta_0$ and transition kernel $M$ on a measurable state-space $\mathsf{E}$ 
and let $G$ be a bounded, measurable and strictly positive real-valued function on $\mathsf{E}$.
The Feynman-Kac expresses the evolution of the initial distribution as follows. Let $n \in\mathbb{N}$ and any $\varphi:\mathsf{E}\rightarrow\mathbb{R}$ bounded and measurable, the Feynman-Kac measure $\eta_n(\varphi)$ is given by
\be
\eta_n(\varphi) := \frac{\mathbb{E}\left[\varphi(X_n)\prod_{p=0}^{n-1} G(X_p)\right]}{\mathbb{E}\left[\prod_{p=0}^{n-1} G(X_p)\right]}
\label{FK}
\ee
where $\mathbb{E}[\cdot]$ is the expectation operator w.r.t. the probability law of the afore-mentioned Markov chain. In this work, we will be 
in particular interested in evaluating $\eta_{\infty}(\varphi):=\lim_{n\rightarrow+\infty}\eta_n(\varphi)$ which exists under certain 
mathematical assumptions. However, except for trivial cases, the quantity $\eta_n(\varphi)$ cannot be evaluated analytically 
and one must rely upon numerical methods. These approaches are referred to as sequential Monte Carlo or particle filters 
in mathematics/statistics and under the general denomination of 
Diffusion Monte Carlo (DMC) in physics. Several variants of DMC have been introduced such as
Green's function Monte Carlo \cite{binder, kalos1962monte},  Fixed-Node Diffusion Monte Carlo \cite{reynolds1982}, Pure Diffusion Monte Carlo
\cite{caffarel_1988, caffarel1986treatment}, 
Stochastic Reconfiguration Monte Carlo \cite{assaraf2000diffusion,heth,sorr,sorella2000green} and Reptation Monte Carlo \cite{moroni1999}, to cite 
the main ones.

In practice, calculations are done by introducing a McKean interacting particle system involving a number of particles to which stochastic rules 
built from $M$ (sampling using the Markov kernel)  and $G$ (resampling using the weight $G$) are applied iteratively. 
In the mathematical literature, 
these methods are by now rather well understood, with a plethora of results; see for instance
\cite{beskos,delm,delm:13,dhj,whiteley1}.
In the following where the estimator discussed is borrowed to physics, 
we will adopt the name DMC for the preceding numerical algorithm. Note that 
in physics, the $N$ samples/particles of the evolving population are called walkers, so the two terms "particles" and 
"walkers" will be used interchangeably in this article.
Now, a key point is that a great freedom exists in choosing either the 
the stochastic rules and/or the estimator employed for $\eta_n(\varphi)$. The problem of making some optimal choice leading to smaller variances 
and reduced finite-population biases is thus a central issue in numerical implementations.

In this work we analyze an original estimator for the Feynman-Kac measure introduced in physics \cite{caffarel_1988, caffarel1986treatment} and which appears, to the best of our knowledge, 
not to have been considered in the mathematical literature. In physics, the problem considered consists in solving the imaginary-time dependent 
Schr\"odinger equation using the Feynman-Kac formula.

In its simplest formulation,
the Feynman-Kac formula is a particular case of Eq.(\ref{FK}) with the following setting:
i) $Q(x,y)=G(x)M(x,dy)$ is the kernel of the operator $e^{-\tau H}$ where $H$ is the Hamiltonian operator describing the system and $\tau$ a small positive 
quantity playing the role of a time step, ii) $M(x,dy)$ the Markov kernel corresponding to the brownian process,
and iii) $G(x)=e^{-\tau V({\bf x})}$ where $V({\bf x})$ is the potential function. 
In practice, for non-trivial quantum systems,  calculations are possible (small enough statistical errors) 
only if importance sampling is introduced. We are then led to a so-called importance sampled 
Feynman-Kac formula, which is still a particular case of Eq.(\ref{FK}), but with some more general operator $Q$. 
Details are given in Appendix \ref{FKP}.
A general presentation of this setting and the way it is derived in physics can be found for example in \cite{caffarel_1988,Simon}. 
As known, the quantity 
$\eta_{\infty}(G)$ obtained from Eq.(\ref{FK}) turns out to be the eigenvalue of the operator $Q$, that is the $\lambda\in\mathbb{R}^+$ such that for some $h:\mathsf{E}\rightarrow\mathbb{R}^+$, $\int_{\mathsf{E}}h(y) Q(x,dy) = \lambda h(x)$; the computation of which has found several 
applications in mathematics, for example, in rare-events estimation \cite{dd04,whiteley}. 
In physics, getting $\lambda$ gives the ground-state energy, $E_0$,
via the relation $\lambda= e^{-\tau E_0}$ and $h$ is
the ground-state eigenstate from which physical properties can be computed \cite{rousset}.

%A collection of $N$ samples/particles (called walkers in physics - the phrase particles and walkers are used exchangeably in this article) are simulated in parallel and undergo two operations of sampling, using the Markov kernel $M$ and resampling using the weights $G$.  This is also called stochastic reconfiguration Monte Carlo in physics; see \cite{heth,sorr} for instance.

Based upon the Feynman-Kac formula representation, different FK-like formulae can be proposed to evaluate $\eta_{\infty}(\varphi)$.
For instance, one can consider the following quantity
\begin{equation}\label{eq:standard}
\overline{\eta}_n(\varphi):=\frac{1}{n}\sum_{k=0}^{n-1} \eta_k(\varphi),
\end{equation}
which, as $n$ grows large, will converge, under assumptions, to $\eta_{\infty}(\varphi)$.  In this work, we introduce the 
alternative quantity built from $\eta_k$ and defined as follows.
Let $l\in\mathbb{N}$
be a lag, which can be chosen to be any fixed value and define the function for $x_0\in\mathsf{E}$:
$$
Q^l(\varphi)(x_0) := \int_{\mathsf{E}^l} \varphi(x_l) \prod_{k=1}^l Q(x_{k-1},dx_k).
$$
We define the quantity
\begin{equation}\label{eq:lagged}
\frac{\overline{\eta}_n(Q^l(\varphi))}{\overline{\eta}_n(Q^l(1))}
\end{equation}
which leads to the following FK-like formula for the limiting invariant measure
\be
\eta_{\infty}(\varphi) = \lim_{n\rightarrow \infty} \frac{\overline{\eta}_n(Q^l(\varphi))}{\overline{\eta}_n(Q^l(1))}.
\ee
In the following, we will refer to \eqref{eq:lagged} as the \emph{lagged} Feynman-Kac formula.

The general idea of why one may prefer to apply the numerical DMC algorithm to \eqref{eq:lagged} instead of \eqref{eq:standard} 
is the flexibility resulting from the introduction of the lag parameter $l$. For example, and without entering into details at this stage, 
using DMC for approximating \eqref{eq:standard} necessitates 
in general a large number $N$ of walkers to extrapolate to zero the population bias of order $\frac{1}{N}$
as $N$ goes to infinity. In sharp contrast, this problem becomes much less severe using \eqref{eq:lagged} since, as we shall show,
the $N$-dependence of the results decreases rapidly as the lag $l$ increases. 
From that, one might obtain a substantial computational saving in the DMC simulation.
Finally, as already mentioned and to the best of our knowledge, 
a mathematical study of the DMC approximation of \eqref{eq:lagged} has not been undertaken; this is the topic of the present study.

\subsection{Summary of Main Results and Article Structure}

The analysis of the DMC approximation of \eqref{eq:lagged} is rather challenging.  As we will show later on,  this consists of a normalized estimator of walkers over the lag $l$,  time-averaged in as in \eqref{eq:lagged}. The standard theory associated to DMC (e.g.~\cite{delm}) is not trivially applicable to the estimator that is used in practice,  hence requiring an original approach.  In this article, under assumptions,  we prove the following interesting results:
\begin{itemize}
\item{For a fixed number of particles $N$ we prove an almost sure (as $n$ grows) limit for the $\mathbb{L}_1-$error of the  DMC approximation of \eqref{eq:lagged}  and show that this is at most $\mathcal{O}(\exp\{-\kappa l\}/N)$, $\kappa>0$.  This is Theorem \ref{prop:main_prop} and Corollary \ref{cor:main_cor}.}
\item{For a fixed number of particles $N$ and time horizon $n$
we prove that the appropriately centered $\mathbb{L}_1-$error
DMC approximation of \eqref{eq:lagged}  is at most $\mathcal{O}(l/\sqrt{N})$. 
The bound is uniform in time ($n$).
This is Theorem \ref{theo:l1}.}
\item{For a fixed number of particles $N$ we prove an asymptotic in $n$ central limit theorem (CLT) for the estimator.  This is in Theorem \ref{theo:clt1}.}
\end{itemize}
The implication of these results are rather important.  Theorem \ref{prop:main_prop} and 
Corollary \ref{cor:main_cor}
establish that,  as is done in practice,  DMC can be run with $N$ the number of walkers small and $l$ small,  for long-times and the error of the approximation can be vanishingly (exponentially) small.  
Theorem \ref{theo:l1} also shows that one should not necessarily grow $l$ arbitrarily large as there is a price to pay.  We also use our theorems to compare with the DMC
approximation of \eqref{eq:standard}.  
Theorem \ref{theo:clt1} provides an exact characterization of the asymptotic in time variance.  Our results are also confirmed in numerical simulations.
The proofs of the afore mentioned results require some non-standard arguments based upon novel path-wise solutions of the Poisson equation for Markov chains.  Our proofs are not confined to problems in physics and indeed,  apply to any application where the particle approximation
of \eqref{eq:lagged} is of practical interest.  This constitutes many applications in rare-events estimation and
stochastic control problems; see \cite{dd04,whiteley} for instance.

This paper is structured as follows.  In Section \ref{sec:fkform} we detail the formulae of interest and the Diffusion Monte Carlo algorithm.  In Section \ref{sec:est} we give the numerical estimator under study.  Section \ref{sec:main_res} presents our mathematical results with discussion.  The appendix contains the technical details associated to the proofs of our main results as well as some details of the application in physics.

\section{Feynman-Kac Formulae and Diffusion Monte Carlo}\label{sec:fkform}

\subsection{Notation}

Let $(\mathsf{E},\mathcal{E})$ be a measurable space. Denote by $\mathcal{P}(\mathsf{E})$
and $\mathcal{M}(\mathsf{E})$ the collection of probability measures and non-negative measures on $(\mathsf{E},\mathcal{E})$.  For $\varphi:\mathsf{E}\rightarrow\mathbb{R}$ we write $\mathcal{B}_b(\mathsf{E})$ as the collection of bounded and measurable functions on $(\mathsf{E},\mathcal{E})$. For $\varphi\in\mathcal{B}_b(\mathsf{E})$, the sup-norm is written $\|\varphi\|\sup_{x\in\mathsf{E}}|\varphi(x)|$.
For $(\mu,\varphi)\in\mathcal{M}(\mathsf{E})\times\mathcal{B}_b(\mathsf{E})$, $\mu(\varphi)=\int_{\mathsf{E}}\varphi(x)\mu(dx)$.
Let $P:\mathsf{E}\rightarrow\mathcal{M}(\mathsf{E})$, $(\mu,\varphi)\in\mathcal{M}(\mathsf{E})\times\mathcal{B}_b(\mathsf{E})$ we use the short-hand notation for $x\in\mathsf{E}$:
$$
\mu P(\varphi) = \int_{\mathsf{E}}\mu(dx) ~P(\varphi)(x) \quad\mbox{\rm and}\quad P(\varphi)(x) = \int_{\mathsf{E}} P(x,dy) \varphi(y).
$$
Note that one can use the notation $\mu P(\varphi)$ and $\mu(P(\varphi))$ exchangeably,  but we generally use the former.
$\mathbb{N}_0=\mathbb{N}\cup\{0\}$. For $P:\mathsf{E}\rightarrow\mathcal{M}(\mathsf{E})$, $\varphi\in\mathcal{B}_b(\mathsf{E})$ and $n\in\mathbb{N}$, $P^n(\varphi)(x_0)=\int_{\mathsf{E}^n}
\varphi(x_n) \prod_{k=1}^n P(x_{k-1},dx_k)$ with the convention that 
$P^0(\varphi)(x_0)=\varphi(x_0)$. For $A\in\mathcal{E}$ the Dirac measure is written $\delta_A(dx)$, with the convention that if $A=\{x\}$, $x\in\mathsf{E}$, we write $\delta_x(dy)$.
For $(\mu,\nu)\in\mathcal{P}(\mathsf{E})^2$,  we denote by $\|\mu-\nu\|_{\tiny \textrm{tv}}=\sup_{A\in\mathcal{E}}|\mu(A)-\nu(A)|$ as the total variation distance.
$\mathcal{N}(0,\sigma^2)$ denotes the one-dimensional normal distribution of mean 0 and variance $\sigma^2>0$. 

\subsection{Feynman-Kac Semigroups}
\label{FKSG}

%Consider a collection of positive integral operators
%$
%Q_n(x,dy):=
%$ indexed by $n\geq 1$ on some state space $E$ . For a given measure $\mu(dx)$ and
%a given function $f(y)$ we denote by $Q_n(f)(x)$ and $(\mu Q_n)(dy)$ the function and the measure defined by
%$$
%Q_n(f)(x)=\int Q_n(x,dy)~f(y)\quad \mbox{\rm and}\quad (\mu Q_n)(dy)=\int\mu(dx) Q_n(x,dy)
%$$
%For any $p\leq n$, we also denote by $Q_{p,n}$ the composition of operators
%$$
%Q_{p,n}(x,dz)=\int Q_{p+1}(x,dy)Q_{p+1,n}(y,dz)=
%\int Q_{p,n-1}(x,dy)Q_{n}(y,dz)
%$$
%Note that
%$$
%Q_n(f)(x)= G_{n-1}(x)~M_n(f)(x)\quad \mbox{\rm with}\quad
%M_n(f)(x)=\frac{Q_n(f)(x)}{Q_n(1)(x)}\quad \mbox{\rm and}\quad
%G_{n-1}(x)=Q_n(1)(x)
%$$
%This gives the probabilistic interpretation
%$$
%Q_{p,n}(f)(x)=\EE\left(f(X_n)\prod\limits_{k=p}^{n-1}G_k(X_k)~|~X_p=x  \right)
%$$
%with a Markov chain $X_n$ with transitions
%$$
%\PP(X_n\in dy~|~X_{n-1}=x)=M_n(x,dy)
%$$
Consider the  Feynman-Kac measures on a measurable space $(\mathsf{E},\mathcal{E})$, for $(n,\varphi)\in\mathbb{N}_0\times\mathcal{B}_b(\mathsf{E})$:
$$
\eta_n(\varphi)=\gamma_n(\varphi)/\gamma_n(1)
\quad\mbox{\rm with}\quad
\gamma_n(\varphi):=\EE\left(\varphi(X_n)\prod\limits_{p=0}^{n-1}G(X_p) \right)
$$
where $G:\mathsf{E}\rightarrow\mathbb{R}^+$ is a strictly positive and bounded potential and the expectation $\mathbb{E}(\cdot)$
is w.r.t.~the law of a time-homogenous Markov chain with initial distribution $\eta_0$
and transition kernel $M:\mathsf{E}\rightarrow\mathcal{P}(\mathsf{E})$.
We use the convention $\gamma_0=\eta_0=\mbox{\rm Law}(X_0)$, for $n=0$.
The unnormalized measures $\gamma_n$ have a linear evolution given for any $n\geq 1$ by the recursion 
\begin{equation}\label{def-gamma}
\gamma_n(\varphi)=\gamma_{n-1}Q(\varphi)
\quad \mbox{\rm and we have}\quad
\gamma_n(\varphi)=\eta_n(\varphi)~\prod_{0\leq p<n}\eta_p(G)
\end{equation}
where $Q(x,dy)=G(x)M(x,dy)$.
%We can also work with the updated measures
%$$
%\widehat{\gamma}_n(f):=\gamma_n(fG_n)=\eta_n(f G_n)~\prod_{0\leq p<n}\eta_p(G_p)=\widehat{\eta}_n(f)~\prod_{0\leq p\leq n}\eta_p(G_p)
%$$
The normalized measures have a nonlinear evolution
\begin{equation}\label{def-eta}
\eta_{n+1}(\varphi)=\Phi(\eta_{n})(\varphi)=\Psi_{G}(\eta_{n})M(\varphi)\quad \mbox{\rm with}\quad
\Psi_{G}(\eta_n)(\varphi):=\frac{\eta_n(G\varphi)}{\eta_n(G)}.
\end{equation}
The evolution semigroup associated with the flow $\eta_n$ is given for any $p\leq n$  by
$$
\eta_{n}(\varphi)=\Phi^{n-p}(\eta_{p})(\varphi)\quad \mbox{\rm with}\quad 
\Phi^{n-p}(\eta_p)(\varphi)=\Phi\circ\Phi\circ\ldots\circ \Phi(\eta_p)
$$
where the composition is $(n-p)-$times.
Note that
$$
\Phi^{n-p}(\eta_{p})(\varphi)=\frac{\eta_p Q^{n-p}(\varphi)}{\eta_p Q^{n-p}(1)}\quad
\mbox{\rm and}\quad \Phi^l(\overline{\eta}_n)(\varphi)=
\frac{\overline{\eta}_nQ^l(\varphi)}{\overline{\eta}_nQ^l(1)}.
$$
For any given $l\geq 1$ we set
$$
\mu^{l}_n(\varphi):=\frac{1}{n}\sum_{0\leq k< n}\gamma_{l+k}(\varphi)/\gamma_{k}(1)=
\frac{1}{n}\sum_{0\leq k< n}\eta_k(Q^l(\varphi))=\overline{\eta}_nQ^l(\varphi)
$$
and their normalized versions
$$
\overline{\mu}^{l}_n(\varphi):=
\mu^{l}_n(\varphi)/\mu^{l}_n(1)\Longrightarrow \overline{\mu}^{l}_n= \Phi^l(\overline{\eta}_n).
$$
%For time homogeneous models $(G_n,M_n)=(G,M)$ we have $\Phi_n=\Phi$ and we set
%$
%\Phi_{p,p+n}=\Phi^n\quad \mbox{\rm and}\quad
% Q_{p,p+n}=Q^{n}
%$. 
In our context, under some stability conditions (see e.g.~\cite{delm,whiteley} for instance),  there exists an invariant measure
$\eta_{\infty}$, a parameter $\lambda>0$ and a function $h\geq 0$ such that for any $(x,\varphi)\in\mathsf{E}\times\mathcal{B}_b(\mathsf{E})$
$$
Q(h)(x)=\lambda~h(x)\quad\mbox{\rm and}\quad
\eta_{\infty}(\varphi)=\Phi(\eta_{\infty})(\varphi)\quad (\Longrightarrow\quad Q^n(h)=\lambda^n~h).
$$
Applying the above fixed point equation to $\varphi=h$ we check that
$$
\Phi^n(\eta_{\infty})(h)=\frac{\eta_{\infty}Q^n(h)}{\eta_{\infty}Q^n(1)}=\eta_{\infty}(h)=\lambda^n~\frac{\eta_{\infty}(h)}{\eta_{\infty}Q^n(1)}\Longleftrightarrow
\eta_{\infty}Q^n(1)=\lambda^n.
$$
Also note that
$$
G(x)=Q(1)(x)\Longrightarrow \eta_{\infty}(G)=\lambda.
$$
More generally, for any $\varphi\in\mathcal{B}_b(\mathsf{E})$ we have
$$
\Phi^n(\eta_{\infty})(\varphi)=\eta_{\infty}(\varphi)\Longleftrightarrow
\eta_{\infty}Q^n(\varphi)=\lambda^n~\eta_{\infty}(\varphi).
$$

\subsection{Diffusion Monte Carlo}

DMC is a discrete-time system of $N$ walkers $\xi_n=\left(\xi_n^i\right)_{1\leq i\leq N}$. The system starts with $N$ independent copies of a random variable with distribution $\eta_0$.  Given the system   $\xi_{n}$ at some time $n\geq 0$, we sample $N$ conditionally independent walkers  $\xi_{n+1}^i$ with their respective distribution
$
\Phi(\eta_n^N)
$,
with  $$\eta^N_n:=m(\xi_n):=\frac{1}{N}\sum_{1\leq i\leq N}\delta_{\xi^i_n}.
$$

In other words, the DMC method consists of approximating the measure $\eta_n$ by using the occupation measure $\eta^N_n$ associated with a system of $N$ walkers. The initial positions of the walkers are randomly chosen from the distribution $\eta_0$. The evolution of each walker follows then the following selection/mutation steps:
\begin{itemize}
\item Selection: We evaluate the current position $\xi_n^i$ of all of the walkers at its potential value $G(\xi_n^i)$. 
For each walker,  we select a walker amongst the current collection with probability $G(\xi_n^i)/\eta_n^N(G)$, giving the new position $\widehat{\xi}_n^i$.
\item Mutation: We move the selected walker $\widehat{\xi}_n^i=x$ to a new location $\xi_{n+1}^i=y$ using the transition kernel $M(x,dy)$.
\end{itemize}

\section{Numerical Estimator}\label{sec:est}

\subsection{Unnormalized Particle Measures}

We consider some well-known facts about particle approximation of unnormalized measures; this discussion will prove useful as we give lagged estimators that are used in the physics literature.
Mimicking the r.h.s.~formula in \eqref{def-gamma}, the unbiased estimate of $\gamma_n(\varphi)$ (see \cite[Chapter 7]{delm}) is defined  by
$$
\gamma^N_n(\varphi):=\eta^N_n(\varphi)~\prod_{0\leq p<n}\eta^N_p(G).
$$
The unbiasedness property ensures that
$$
\EE(\gamma^N_n(\varphi))=\gamma_n(\varphi)=\eta_0Q^{n}(\varphi).
$$ 
To check this claim, note that
$$
\EE(\eta^N_n(\varphi)~|~\xi_{n-1})=\Phi(\eta_{n-1}^N)(\varphi)=\frac{\eta_{n-1}^NQ(\varphi)}{\eta_{n-1}^NQ(1)}=\frac{\eta_{n-1}^NQ(\varphi)}{\eta_{n-1}^N(G)}.
$$
This implies that
$$
\EE(\gamma^N_n(\varphi)~|~\xi_{n-1})=\eta_{n-1}^NQ(\varphi)~\prod_{0\leq p<(n-1)}\eta^N_p(G)=
\gamma^N_{n-1}Q(\varphi).
$$
Iterating the argument, for any $k< n$ we check that
\begin{equation}\label{eq:cond_exp_gamma}
\EE(\gamma^N_n(\varphi)~|~\xi_{k})=\gamma^N_{k}Q^{n-k}(\varphi).
\end{equation}
Applying the above to $k=0$ we recover the unbiasedness property
$$
\EE(\gamma^N_n(\varphi)~|~\xi_{0})=\eta^N_{0}Q^{n}(\varphi)\quad \mbox{\rm and}\quad
\EE(\gamma^N_n(\varphi))=\eta_{0}Q^{n}(\varphi)=\gamma_n(\varphi).
$$
Note that 
$$
\eta_0=\eta_{\infty}\Longrightarrow
\EE(\gamma^N_n(\varphi))=\eta_{\infty}(Q^n(\varphi))=\lambda^n~\eta_{\infty}(\varphi).
$$

\subsection{Fix Lagged Estimator}

For any  given $(l,k,\varphi)\in\mathbb{N}_0^2\times\mathcal{B}_b(\mathsf{E})$ we have
$$
\gamma^N_{l+k}(\varphi)/\gamma^N_{k}(1)=\eta^N_{l+k}(\varphi)~\prod_{k\leq p<k+l}\eta^N_p(G)=F_{\varphi}(\xi_k,\xi_{k+1},\ldots,\xi_{k+l})
$$
with
$$
F_{\varphi}(\xi_k,\xi_{k+1},\ldots,\xi_{k+l})=
m(\xi_{l+k})(\varphi)~\prod_{k\leq p<k+l} m(\xi_p)(G).
$$
The estimate we want to analyze is based upon:
$$
\mu^{l,N}_n(\varphi):=\frac{1}{n}\sum_{0\leq k< n}\gamma^N_{l+k}(\varphi)/\gamma^N_{k}(1)=
\frac{1}{n}\sum_{0\leq k< n}F_\varphi(\xi_k,\xi_{k+1},\ldots,\xi_{k+l}).
$$
The estimator is:
\begin{equation}\label{eq:main_est}
\overline{\mu}^{l,N}_n(\varphi):=\frac{\mu^{l,N}_n(\varphi)}{\mu^{l,N}_n(1)}=\frac{\frac{1}{n}\sum_{0\leq k< n}F_{\varphi}(\xi_k,\xi_{k+1},\ldots,\xi_{k+l})}{\frac{1}{n}\sum_{0\leq k< n}F_1(\xi_k,\xi_{k+1},\ldots,\xi_{k+l})}.
\end{equation}

To understand the intuition for this estimator, observe that
$$
l=0\Longrightarrow
\gamma^N_{k}(\varphi)/\gamma^N_{k}(1)=\eta^N_k(\varphi)
$$
and more generally for any $l\geq 0$ we have, noting \eqref{eq:cond_exp_gamma}:
$$
\frac{
\EE\left(
\gamma^N_{k+l}(\varphi)~|~\xi_{k}\right)}{\gamma^N_{k}(1)}=
\frac{\gamma^N_{k}Q^{l}(\varphi)}{\gamma^N_{k}(1)}=\eta^N_{k}Q^{l}(\varphi)
$$
and hence
$$
\EE(\mu^{l,N}_n(\varphi))=\frac{1}{n}\sum_{0\leq k< n}\EE\left(\gamma^N_{l+k}(\varphi)/\gamma^N_{k}(1)\right)=
\frac{1}{n}\sum_{0\leq k< n}\EE\left(\eta^N_{k}Q^{l}(\varphi)\right).
$$

The limiting object is given by
$$
\frac{1}{n}\sum_{0\leq k< n}\eta_{k}(Q^l(\varphi))\simeq_{n\uparrow\infty}
\eta_{\infty}(Q^l(\varphi))=\lambda^l~\eta_{\infty}(\varphi).
$$
As a result,  we expect that for long-time intervals $n$ that
$\overline{\mu}^{l,N}_n(G)$ should give a good approximation of
$\lambda$ the eigenvalue; or indeed,  $\overline{\mu}^{l,N}_n(\varphi)$ an approximation of $\eta_{\infty}(\varphi)$. 
What is important, is to understand the bias and variance of this estimator, precisely in the long-time regime, which is the topic of the next section. Particularly,  what the effect of $l\in\mathbb{N}$ is.

%Note that there is a bias of order $1/N$; that is we have
%$$
%\EE\left(\eta^N_{k}(Q_{k,k+l}(f))\right)=\eta_{k}(Q_{k,k+l}(f))\pm cte/N
%$$
%
%Estimating precisely the bias and the variance of the estimate
%$$
%\frac{1}{n}\sum_{0\leq k< n}\gamma^N_{l+k}(f)/\gamma^N_{k}(1)
%\simeq_{N\uparrow\infty}
%\frac{1}{n}\sum_{0\leq k< n}\eta_{k}(Q^l(f))
%\simeq_{n\uparrow\infty}
%\eta_{\infty}(Q^l(f))=E_0^l~\eta_{\infty}(f)
%$$
%is an interesting subject.

\section{Main Results}\label{sec:main_res}

\subsection{Assumption}

\begin{hypA}\label{ass:smc}
There exists a $C<+\infty$ such that
$$
\sup_{(x,y)\in\mathsf{E}^2}\frac{G(x)}{G(y)} \leq C.
$$
There exists a $(\theta,\beta)\in(0,1)\times\mathcal{P}(\mathsf{E})$ such that for any $(x,A)\in\mathsf{E}\times\mathcal{E}$
$$
\theta\int_A \beta(dy) \leq \int_{A}M(x,dy) \leq \theta^{-1}\int_A \beta(dy).
$$ 
\end{hypA}

This assumption is well-studied in the context of particle approximations of Feynman-Kac formulae; see
\cite{delm} for example.  The assumption will typically hold in scenarios where the state-space $\mathsf{E}$ is compact and seldom holds otherwise.  
For instance, it is satisfied for Markov transitions of 
elliptic diffusions on compact manifolds $\mathsf{E}$, see for instance the pioneering work of \cite{aronson,nash,varopoulos}
on Gaussian estimates for heat kernels on manifolds. 
The assumption can be relaxed using the methods in \cite{beskos,caffareldp-24,dhj,douc,whiteley1,whiteley3} at the cost of more technical proofs that one would expect to lead to the same qualitative conclusions.
Note that (A\ref{ass:smc}) assures that  for $\varphi\in\mathcal{B}_b(\mathsf{E})$
$$
\eta_{\infty}(\varphi) := \lim_{n\rightarrow\infty}\frac{\gamma_n(\varphi)}{\gamma_n(1)}.
$$
is well-defined.  Indeed in \cite{delm,delm:13,dd04,dm_2003,whiteley} rates of convergence of $\eta_n(\varphi)$ to 
$\eta_{\infty}(\varphi)$ have been established.

\subsection{Particle System as a Markov Chain}\label{sec:particle_mc}

We can consider the particle system $\xi_{0},\xi_1,\dots$ as a time-homogeneous Markov chain on $(\mathsf{E}^N,\mathcal{E}^{\otimes N})$ with initial distribution $\prod_{i=1}^N \eta_0(d\xi_0^i)$ and transition 
$R^N(\xi_{p-1},d\xi_p)=\prod_{i=1}^N\Phi(m(\xi_{p-1}))(d\xi_p^i)$. 
Moreover, under (A\ref{ass:smc}) the Markov chain is uniformly ergodic and there exist a unique invariant measure,  which we shall call $\Pi^N$.  
Note that it is simple to see that $\Pi^N$ is deterministic,  in that it does not depend on the randomness of the simulated particle system and,  indeed,  an exact expression is availabe; see \cite[Corollary 1]{hobert} 
for example.  Now as the particle system is exchangeable,  the marginal of $\Pi^N$ in any co-ordinate is the same
and we will write it as $\pi^{N}\in\mathcal{P}(\mathsf{E})$.  We have the following result which shall prove useful
in the subsequent discussion.

\begin{prop}\label{prop:inv_dist}
Assume (A\ref{ass:smc}).  Then there exists a $C\in(0,\infty)$ such that for any $N\in\mathbb{N}$:
$$
\Vert \pi^N-\eta_{\infty}\Vert_{\tiny \textrm{\emph{tv}}}\leq \frac{C}{N}.
$$
\end{prop}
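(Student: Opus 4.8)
The plan is to exploit the fact that $\pi^N$ is the first co-ordinate marginal of the invariant measure $\Pi^N$ of the particle Markov chain, so that $\pi^N(\varphi) = \Exp_{\Pi^N}(\eta_n^N(\varphi))$ for any fixed time $n$, where the expectation is taken under the stationary regime $\xi_0 \sim \Pi^N$. Since $\Pi^N$ is invariant, this holds for every $n$, and in particular we may send $n \to \infty$. The idea is then to decompose
\begin{equation*}
\pi^N(\varphi) - \eta_\infty(\varphi) = \Exp_{\Pi^N}\big(\eta_n^N(\varphi) - \eta_\infty(\varphi)\big)
= \Exp_{\Pi^N}\big(\eta_n^N(\varphi) - \Phi^n(\eta_0^N)(\varphi)\big) + \Exp_{\Pi^N}\big(\Phi^n(\eta_0^N)(\varphi) - \eta_\infty(\varphi)\big),
\end{equation*}
where $\eta_0^N = m(\xi_0)$ is the empirical measure at time $0$. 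The second term is controlled deterministically: under (A\ref{ass:smc}) the semigroup $\Phi$ contracts in total variation at a geometric rate, so $\sup_{\|\varphi\|\leq 1}|\Phi^n(\mu)(\varphi) - \eta_\infty(\varphi)| \leq C\rho^n$ uniformly over $\mu \in \mathcal{P}(\mathsf{E})$ for some $\rho \in (0,1)$; this is the standard stability estimate recalled after (A\ref{ass:smc}). Hence the second term is $\leq C\rho^n \to 0$.

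The first term is the particle approximation bias at time $n$, started from the stationary particle configuration. Here I would invoke the classical non-asymptotic bias bound for DMC/particle approximations of Feynman-Kac flows under the strong mixing assumption (A\ref{ass:smc}): there is a constant $C$, independent of $n$ and of the (possibly random) initial measure, such that
\begin{equation*}
\big| \Exp\big(\eta_n^N(\varphi) \mid \xi_0\big) - \Phi^n(\eta_0^N)(\varphi) \big| \leq \frac{C\|\varphi\|}{N}
\end{equation*}
uniformly in $n$ and in $\xi_0$; see \cite[Chapter 7]{delm}. Taking $\Exp_{\Pi^N}$ of both sides and using the tower property gives $|\Exp_{\Pi^N}(\eta_n^N(\varphi) - \Phi^n(\eta_0^N)(\varphi))| \leq C\|\varphi\|/N$. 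Combining the two bounds yields, for every $n$,
\begin{equation*}
\big|\pi^N(\varphi) - \eta_\infty(\varphi)\big| \leq \frac{C\|\varphi\|}{N} + C\rho^n.
\end{equation*}
Since the left-hand side does not depend on $n$, letting $n \to \infty$ removes the second term, and taking the supremum over $\|\varphi\| \leq 1$ (and using that total variation distance between probability measures equals this supremum up to a factor) gives $\|\pi^N - \eta_\infty\|_{\tiny\textrm{tv}} \leq C/N$.

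The main obstacle is making precise and correctly citing the uniform-in-time, uniform-in-initial-condition bias bound for the particle approximation: the textbook statements are usually phrased for a deterministic initial law $\eta_0$, whereas here the "initial law" is the random empirical measure $m(\xi_0)$ with $\xi_0 \sim \Pi^N$, so one needs the conditional form of the bound (conditioning on $\xi_0$) and then averages. One must also be careful that the bias constant $C$ is genuinely uniform in $n$ — this is exactly where assumption (A\ref{ass:smc}) is used, via the time-uniform stability of $\Phi$ — and that it does not secretly depend on $N$. An alternative, cleaner route that sidesteps some of this is to use the known exact expression for $\Pi^N$ referenced via \cite[Corollary 1]{hobert} together with the $\mathbb{L}_1$ error estimates for $\eta_n^N$ at stationarity, but the decomposition above is the most transparent and self-contained argument.
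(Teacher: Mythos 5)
Your argument is correct and is essentially the paper's own proof: both combine a time-uniform $\mathcal{O}(1/N)$ bias bound for the particle approximation with the stability/convergence of the Feynman--Kac flow under (A\ref{ass:smc}), and then let $n\to\infty$ to kill the geometric term. The only real difference is bookkeeping: the paper starts the chain from $\eta_0^{\otimes N}$, uses the bias bound exactly as stated in \cite{dg98,delmoral_2000} against $\eta_n=\Phi^n(\eta_0)$, and identifies $\pi^N(\varphi)$ as $\lim_{n\to\infty}\mathbb{E}[\varphi(\xi_n^1)]$ via uniform ergodicity, whereas you start in stationarity and therefore need the (still standard, but conditional-on-$\xi_0$, arbitrary-initial-configuration) form of the bias bound that you correctly flag as the delicate citation.
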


\begin{proof}
Recall from~\cite{dg98,delmoral_2000} the time-uniform estimate, for $\varphi\in\mathcal{B}_b(\mathsf{E})$
\begin{equation}\label{eq:bias_uniform}
\sup_{n\geq 0}\left|\EE\left[\eta^N_n(\varphi)-\eta_n(\varphi)\right]\right| \leq \frac{C\|\varphi\|}{N}.
\end{equation}
Now we have that
\begin{eqnarray*}
\left|\EE\left[\varphi(\xi_n^1)-\eta_{\infty}(\varphi)\right]\right| & = &  \left|\EE\left[\eta^N_n(\varphi)-\eta_{\infty}(\varphi)\right]\right| \\
&\leq &
\left|\EE\left[\eta^N_n(\varphi)-\eta_{n}(\varphi)\right]\right| + |\eta_{n}(\varphi)-\eta_{\infty}(\varphi)|.
\end{eqnarray*}
Then on noting that
$$
|\pi^N(\varphi)-\eta_{\infty}(\varphi)| = \lim_{n\rightarrow+\infty}\left|\EE\left[\varphi(\xi_n^1)-\eta_{\infty}(\varphi)\right]\right|
$$
the result is concluded by using \eqref{eq:bias_uniform},  $\lim_{n\rightarrow+\infty}|\eta_{n}(\varphi)-\eta_{\infty}(\varphi)|=0$ and basic properties of the total variation distance.
\end{proof}

\subsection{The Effect of the Lag}

\begin{theorem}\label{prop:main_prop}
Assume (A\ref{ass:smc}).  Then for any
$(N,l,\varphi)\in\mathbb{N}^2\times\mathcal{B}_b(\mathsf{E})$ we have almost surely:
$$
\lim_{n\rightarrow+\infty}%\left|\mathbb{E}\left[
\overline{\mu}^{l,N}_n(\varphi)
-\eta_{\infty}(\varphi) = \Phi^l(\pi^{N})(\varphi)- \Phi^l(\eta_{\infty})(\varphi)
%\right]\right| 
%\leq C\|\varphi\|\exp\{-\kappa l\}\textcolor{red}{/N}.
$$
\end{theorem}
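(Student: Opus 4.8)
The plan is to exploit the fact that the particle system $\xi_0,\xi_1,\dots$ is a uniformly ergodic, time-homogeneous Markov chain on $(\mathsf{E}^N,\mathcal{E}^{\otimes N})$ with unique invariant measure $\Pi^N$, as recorded in Section \ref{sec:particle_mc}. First, I would rewrite the numerator and denominator of the estimator \eqref{eq:main_est} as ergodic averages of a fixed, bounded, measurable functional of a \emph{fixed-length window} of the chain. Concretely, define $\Xi_k := (\xi_k,\xi_{k+1},\dots,\xi_{k+l})$; then $(\Xi_k)_{k\geq 0}$ is itself a time-homogeneous Markov chain on $(\mathsf{E}^N)^{l+1}$, and since the underlying chain is uniformly ergodic so is this ``snake'' chain, with a unique invariant law which is the image of $\Pi^N$ under $\xi_0\mapsto(\xi_0,\dots,\xi_l)$ (using the transition $R^N$ to extend). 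We have
$$
\mu^{l,N}_n(\varphi) = \frac1n\sum_{0\leq k<n} F_\varphi(\Xi_k),\qquad
\mu^{l,N}_n(1) = \frac1n\sum_{0\leq k<n} F_1(\Xi_k),
$$
with $F_\varphi$ bounded (by $\|\varphi\|\,C^l$ under (A\ref{ass:smc})) and measurable. By the strong law of large numbers for uniformly ergodic Markov chains (Birkhoff's ergodic theorem applied to the stationary-started chain, together with a standard coupling/zero-one argument to remove the dependence on the initial law), both averages converge almost surely, so
$$
\lim_{n\to\infty}\overline{\mu}^{l,N}_n(\varphi)
= \frac{\EE_{\Pi^N}\!\big(F_\varphi(\Xi_0)\big)}{\EE_{\Pi^N}\!\big(F_1(\Xi_0)\big)}
\qquad\text{a.s.,}
$$
provided the denominator limit is strictly positive, which holds since $F_1\geq (\inf G)^l>0$ under (A\ref{ass:smc}).

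Next I would identify the right-hand side. Using the tower property over $\xi_k$ and the identity $\EE(\gamma^N_{k+l}(\varphi)\mid\xi_k)/\gamma^N_k(1) = \eta^N_kQ^l(\varphi)$ derived just before Section \ref{sec:main_res}, one gets $\EE_{\Pi^N}(F_\varphi(\Xi_0)) = \EE_{\Pi^N}(\eta^N_0 Q^l(\varphi)) = \pi^N Q^l(\varphi)$, where the last equality is because $\eta^N_0 = m(\xi_0)$ averages the $N$ exchangeable coordinates, each marginally distributed as $\pi^N$ under $\Pi^N$, and $Q^l(\varphi)$ is a bounded measurable function of a single coordinate. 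Hence the almost-sure limit equals
$$
\frac{\pi^N Q^l(\varphi)}{\pi^N Q^l(1)} = \Phi^l(\pi^N)(\varphi),
$$
by the definition of $\Phi^l$ recalled in Section \ref{FKSG}. Finally, since $\eta_\infty$ is the fixed point of $\Phi$ we have $\eta_\infty(\varphi) = \Phi^l(\eta_\infty)(\varphi)$, and subtracting gives exactly the claimed identity
$$
\lim_{n\to\infty}\overline{\mu}^{l,N}_n(\varphi) - \eta_\infty(\varphi)
= \Phi^l(\pi^N)(\varphi) - \Phi^l(\eta_\infty)(\varphi).
$$

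\textbf{Main obstacle.} The routine parts are the boundedness/positivity of $F_\varphi,F_1$ and the algebraic identification of the limit. The step that needs the most care is the almost-sure convergence of the two ergodic averages \emph{started from the actual initial distribution} $\prod_{i=1}^N\eta_0(d\xi_0^i)$ rather than from stationarity: one must either invoke a strong law of large numbers valid for uniformly ergodic chains from an arbitrary initial law (which does hold — uniform ergodicity gives a geometric coupling to the stationary chain, and a Borel--Cantelli / Cesàro argument upgrades convergence in probability of the averages to almost-sure convergence), or argue via the zero-one law for the tail $\sigma$-algebra of the ergodic (Harris-recurrent) chain. I would state this as a lemma: for a uniformly ergodic time-homogeneous Markov chain $(Z_k)$ on a measurable space with invariant law $\nu$, and any bounded measurable $f$, $\frac1n\sum_{k<n}f(Z_k)\to\nu(f)$ a.s. for every initial distribution. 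A secondary, purely notational subtlety is handling the ``edge'' windows $\Xi_k$ for $k$ near $n$ (the sum runs over $k<n$ while each window reaches up to $k+l$); this contributes only finitely many extra terms relative to the length-$n$ average and is absorbed in the $1/n$ normalisation, so it does not affect the limit.
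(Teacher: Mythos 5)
Your proposal is correct and follows the same overall strategy as the paper: regard the particle system as a uniformly ergodic Markov chain on $(\mathsf{E}^N,\mathcal{E}^{\otimes N})$ with invariant law $\Pi^N$, show that the numerator and denominator of $\overline{\mu}^{l,N}_n(\varphi)$ are ergodic averages of the bounded windowed functionals $F_\varphi,F_1$ converging almost surely to their $\Pi^N$-stationary expectations, identify those expectations as $\pi^N(Q^l(\varphi))$ and $\pi^N(Q^l(1))$ via the conditional-expectation identity \eqref{eq:cond_exp_gamma} and exchangeability, and conclude using $\eta_\infty=\Phi^l(\eta_\infty)$. The one substantive point where you diverge is the tool used for the almost-sure convergence of the windowed averages: you invoke the strong law of large numbers for Harris/uniformly ergodic chains (Birkhoff plus a coupling or zero-one argument to handle the non-stationary start), whereas the paper proves a quantitative $\mathbb{L}_p$-bound of order $C\|\varphi\|l\epsilon^{-1}/\sqrt{n}$ (Proposition \ref{prop:mc}) via a path-wise Poisson-equation/martingale-plus-remainder decomposition, and then applies Borel--Cantelli. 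Your softer route suffices for Theorem \ref{prop:main_prop} itself and is arguably more elementary; the paper's harder route is not wasted effort, however, since the same decomposition and its explicit rate are reused downstream for the central limit theorem (Propositions \ref{prop:clt_1}--\ref{prop:clt_3}) and mirror the structure of Theorem \ref{theo:l1}. Your handling of the remaining details --- positivity of the denominator via $\inf G>0$ under (A\ref{ass:smc}), boundedness of $F_\varphi$ by $\|\varphi\|C^l$, and the identification $\EE_{\Pi^N}(F_\varphi(\Xi_0))=\pi^N(Q^l(\varphi))$ --- is accurate and matches the paper's.
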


\begin{proof}
Using the exposition in Section \ref{sec:particle_mc} we can use 
Proposition \ref{prop:mc} in the Appendix along with a simple first Borel-Cantelli argument,  to determine that 
\begin{equation}\label{eq:conv_as}
\mu^{l,N}_n(\varphi)\rightarrow_{a.s.}
\int_{\mathsf{E}^{(l+1)N}}m(\xi_{l+1})(\varphi)
\left\{\prod_{p=1}^{l-1}m(\xi_{p})(G)\right\}
\Pi^N(d\xi_1) \prod_{p=1}^{l+1}R^N(\xi_{p-1},d\xi_p).
\end{equation}
where $\rightarrow_{a.s.}$ denotes almost sure convergence as $n\rightarrow\infty$. Moreover, by using standard properties of particle filters,  (see e.g.~\cite[Chapter 7]{delm}) the R.H.S.~of \eqref{eq:conv_as} is equal to
$$
\Pi^N\left(m\left(Q^l(\varphi)\right)\right)
$$
where $m$ is the $N-$equally weighted empirical measure.
Therefore we one has that
\begin{equation}\label{eq:prob_c}
\overline{\mu}^{l,N}_n(\varphi)
\rightarrow_{a.s.}
 \frac{\Pi^N\left(m\left(Q^l(\varphi)\right)\right)}{\Pi^N\left(m\left(Q^l(1)\right)\right)}.
\end{equation}
As the particle system is exchangeable,  the marginal of $\Pi^N$ in any co-ordinate is the same,
we have that 
$$
 \frac{\Pi^N\left(m\left(Q^l(\varphi)\right)\right)}{\Pi^N\left(m\left(Q^l(1)\right)\right)} = 
 \frac{\pi^N\left(Q^l(\varphi)\right)}{\pi^N\left(Q^l(1)\right)} = \Phi^l(\pi^N)(\varphi).
$$
As
$$
\eta_{\infty}(\varphi) =  \Phi^l(\eta_{\infty})\left(\varphi\right)
$$
the proof is completed.
%%Therefore with \eqref{eq:prob_c} and \cite[Theorem 25.12]{bill} we have that 
%%$$
%%\lim_{n\rightarrow+\infty}\mathbb{E}\left[
%%\overline{\mu}^{l,N}_n(\varphi)
%%\right]  = 
%% \frac{\Pi^N\left(m\left(Q^l(\varphi)\right)\right)}{\Pi^N\left(m\left(Q^l(1)\right)\right)}
%%$$
%%as the invariant measure $\Pi^N$ is deterministic.
%We have 
%$$
% \frac{\Pi^N\left(m\left(Q^l(\varphi)\right)\right)}{\Pi^N\left(m\left(Q^l(1)\right)\right)}
%=
%\frac{\pi^{N}\left(Q^l(\varphi)\right)}{\pi^{N}\left(Q^l(1)\right)} = \Phi^l(\pi^{N})\left(\varphi\right).
%$$
%\textcolor{red}{Recall from~\cite{delmoral_2000,dg98} the time-uniform estimate
%$$
%\sup_{n\geq 0}\vert \EE\left(\eta^N_n(\varphi)\right)-\eta_n(\varphi)\vert\leq c\Vert \varphi\Vert/N\quad \mbox{\rm and therefore}\quad
%\Vert \pi^N-\eta_{\infty}\Vert_{\tiny tv}\leq c/N
%$$ 
%for some finite constant $c>0$.
%As
%$$
%\eta_{\infty}(\varphi) =  \Phi^l(\eta_{\infty})\left(\varphi\right)
%$$
%the proof is completed due to the exponential stability property of the semigroup $\Phi(\cdot)$ which holds under 
%(A\ref{ass:smc}); see \cite[Chapter 4]{delm} for example. More. precisely, we have the exponential decays
%$$
%\Vert \Phi^l(\pi^{N})- \Phi^l(\eta_{\infty})\Vert_{\tiny tv}\leq e^{-\kappa l}~\Vert \pi^N-\eta_{\infty}\Vert_{\tiny tv}\leq ce^{-\kappa l}/N
%$$}
\end{proof}

\begin{cor}\label{cor:main_cor}
Assume (A\ref{ass:smc}).  Then 
there exists a $(C,\kappa)\in(0,\infty)^2$ such that 
for any
$(N,l,\varphi)\in\mathbb{N}^2\times\mathcal{B}_b(\mathsf{E})$ we have almost surely:
$$
\lim_{n\rightarrow+\infty}|\overline{\mu}^{l,N}_n(\varphi)-\eta_{\infty}(\varphi)| \leq \frac{Ce^{-\kappa l}\|\varphi\|}{N}.
$$
\end{cor}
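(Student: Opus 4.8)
The plan is to take the almost sure limit identified in Theorem \ref{prop:main_prop} and bound the deterministic quantity $\Phi^l(\pi^N)(\varphi)-\Phi^l(\eta_\infty)(\varphi)$ by combining three ingredients: the fixed-point identity $\Phi^l(\eta_\infty)=\eta_\infty$, the exponential stability of the normalized Feynman--Kac semigroup $\Phi$ under (A\ref{ass:smc}), and the $\mathcal{O}(1/N)$ control of $\|\pi^N-\eta_\infty\|_{\mathrm{tv}}$ furnished by Proposition \ref{prop:inv_dist}.

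First I would invoke the classical consequence of the strong mixing condition (A\ref{ass:smc}) (see e.g.~\cite[Chapter 4]{delm} or \cite{delm:13}): there exist $C<\infty$ and $\rho\in(0,1)$, depending only on $\theta$ and the constant $C$ appearing in (A\ref{ass:smc}), such that for every $l\in\mathbb{N}$ and all $\mu,\nu\in\mathcal{P}(\mathsf{E})$,
$$\|\Phi^l(\mu)-\Phi^l(\nu)\|_{\mathrm{tv}}\leq C\,\rho^{\,l}\,\|\mu-\nu\|_{\mathrm{tv}}.$$
Applying this with $\mu=\pi^N$ and $\nu=\eta_\infty$, using $\Phi^l(\eta_\infty)=\eta_\infty$, the elementary bound $|\sigma_1(\varphi)-\sigma_2(\varphi)|\leq 2\|\varphi\|\,\|\sigma_1-\sigma_2\|_{\mathrm{tv}}$ for $\sigma_1,\sigma_2\in\mathcal{P}(\mathsf{E})$, and finally Proposition \ref{prop:inv_dist}, gives
$$|\Phi^l(\pi^N)(\varphi)-\Phi^l(\eta_\infty)(\varphi)|\leq 2\|\varphi\|\,C\,\rho^{\,l}\,\|\pi^N-\eta_\infty\|_{\mathrm{tv}}\leq \frac{2C^2\,\rho^{\,l}\,\|\varphi\|}{N}.$$
Setting $\kappa:=-\log\rho>0$, renaming the constant $2C^2$ as $C$, and recalling from Theorem \ref{prop:main_prop} that the left-hand side is precisely $\lim_{n\to+\infty}(\overline{\mu}^{l,N}_n(\varphi)-\eta_\infty(\varphi))$ almost surely, concludes the proof; note that $C$ and $\kappa$ depend on neither $N$, $l$ nor $\varphi$.

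The only substantive step is the uniform-in-$l$ geometric contraction of $\Phi^l$; everything else is bookkeeping. This estimate is standard under (A\ref{ass:smc}) — it follows from a Dobrushin ergodic-coefficient argument applied to the relevant multi-step kernels (Boltzmann--Gibbs composed with $M$, equivalently the $h$-transformed kernels), using that (A\ref{ass:smc}) forces a uniform minorization over $l$ steps — so I would cite it rather than reprove it. The minor points to watch are that the contraction rate must be independent of $N$ (it is, since it involves only $\theta$ and $C$ from (A\ref{ass:smc})) and that the $\varphi$-dependence should be expressed through $\|\varphi\|$, which is why the factor $2$ (from $\mathrm{osc}(\varphi)\leq 2\|\varphi\|$) enters.
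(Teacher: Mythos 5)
Your proposal is correct and follows essentially the same route as the paper: invoke Theorem \ref{prop:main_prop} for the almost sure limit, apply the exponential stability $\Vert \Phi^l(\pi^{N})- \Phi^l(\eta_{\infty})\Vert_{\tiny \textrm{tv}}\leq C e^{-\kappa l}\Vert \pi^N-\eta_{\infty}\Vert_{\tiny \textrm{tv}}$ of the normalized semigroup under (A\ref{ass:smc}), and conclude with Proposition \ref{prop:inv_dist}. Your additional care over the factor $2\|\varphi\|$ and the uniformity of the contraction constants in $N$ is sound but does not change the argument.
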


\begin{proof}
Using Theorem \ref{prop:main_prop}  the proof can be completed by using the exponential stability property of the semigroup $\Phi(\cdot)$ which holds under 
(A\ref{ass:smc}); see \cite[Chapter 4]{delm} for example.  More precisely,  we have the exponential decay
$$
\Vert \Phi^l(\pi^{N})- \Phi^l(\eta_{\infty})\Vert_{\tiny \textrm{tv}}\leq C e^{-\kappa l}~\Vert \pi^N-\eta_{\infty}\Vert_{\tiny \textrm{tv}}
$$
for some constants $(C,\kappa)\in(0,\infty)^2$ that do not depend upon $(N,l,\varphi)$.
Application of Proposition \ref{prop:inv_dist} allows one to conclude.
\end{proof}

\begin{rem}
The approach here relies on a novel path-based Markov chain $\mathbb{L}_p-$bound using a martingale plus remainer structure. It is not the first theoretical analysis for particle methods that keeps $N$ fixed and allows another parameter to grow; see \cite{beskos,whiteley2}.
%None-the-less, it is not sharp enough to give non-asymptotic bounds in $n,N$ and $l$. If one uses the upper-bound in Proposition \ref{prop:mc}, the upper-bound explodes as either $N$ or $l$ grow.
\end{rem}

The implication of Theorem \ref{prop:main_prop} and Corollary \ref{cor:main_cor} is rather interesting.  
The results say that for the asymptotic in $n$ regime, that 
the almost sure $\mathbb{L}_1-$error is exponentially small as a function of $l$ 
\emph{irregardless of the number of walkers}, that is one can take $N=\mathcal{O}(1)$.  
From a practical point of view it suggests that a small number of walkers can be run for a long time and so long as $l$ is moderate,  the error of the estimator should be small.  This shows that the parameter $l$ provides quite a substantial freedom.  For instance, one could estimate $\lambda$ using the estimator:
\begin{equation}\label{eq:other_est}
\overline{\eta}^N_n(G):=\frac{1}{n}\sum_{k=0}^{n-1}\eta_k^N(G)
\end{equation}
for which, using the approaches in \cite{delm} and under mathematical assumptions would have a bias 
$$
\left|\mathbb{E}\left[\frac{1}{n}\sum_{k=0}^{n-1}\left\{\eta_k^N(G)-\eta_k(G)\right\}\right]\right|
$$
of $\mathcal{O}(N^{-1})$ for any fixed $n$.  Then it is simple to establish that the asymptotic in $n$ bias
is upper-bounded by a term that is $\mathcal{O}(N^{-1})$.  What this means is that 
the only way in which this conventional estimate can indeed recover $\lambda$, for long-time periods,  is by increasing the number of walkers, which could be subtantially more expensive than using the estimator \eqref{eq:main_est}.  More formally,  when considering the estimator \eqref{eq:main_est} ,  for $\epsilon\in(0,1)$ given,   asymptotically in $n$,  to obtain an error of $\mathcal{O}(\epsilon)$,  one can choose $l=\mathcal{O}(-\log(\epsilon))$, $N=\mathcal{O}(1)$. When considering the estimate \eqref{eq:other_est} then $N=\mathcal{O}(\epsilon^{-1})$ to have an asymptotic bias of $\mathcal{O}(\epsilon)$.  The cost of this computation could be substantially more than considering
\eqref{eq:main_est} depending on the relative of cost of computing
\eqref{eq:main_est} over \eqref{eq:other_est}.

%Theorems \ref{thm:bias_particles} and \ref{thm:var_particles} show that the asymptotic bias and variance can
%be bounded as $\mathcal{O}(l^2/N)$. That is, one cannot indefinitely grow $l$ so as to control the asymptotic bias
%as characterized in Theorem \ref{prop:main_prop}.  For $\epsilon\in(0,1)$ given,   asymptotically in $n$,  to obtain a bias of $\mathcal{O}(\epsilon)$,  one should choose $l=\mathcal{O}(-\log(\epsilon))$, $N=\mathcal{O}(l^2)$.

\subsection{Non-Asymptotic Bounds}

We now investigate the non-asymptotic $\mathbb{L}_1-$error,  where $n,N,l$ are all fixed.
Below $\mathbb{N}_0=\mathbb{N}\cup\{0\}$. We have the following, uniform in time, non-asymptotic
error bound.

\begin{theorem}\label{theo:l1}
Assume (A\ref{ass:smc}). Then there exists a $C\in(0,\infty)$ such that for any $(n,N,l,\varphi)\in
\mathbb{N}_0\times
\mathbb{N}^2\times\mathcal{B}_b(\mathsf{E})$:
$$
\mathbb{E}\left[\left|
\overline{\mu}^{l,N}_n(\varphi)
-
\overline{\mu}^{l}_n(\varphi)
\right|\right] \leq \frac{C\|\varphi\|l}{\sqrt{N}}.
$$
\end{theorem}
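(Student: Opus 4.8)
Throughout set $\mathcal F_k=\sigma(\xi_0,\dots,\xi_k)$, $X_k:=\gamma^N_{k+l}(\varphi)/\gamma^N_k(1)=F_\varphi(\xi_k,\dots,\xi_{k+l})$, and let $c_l:=\mu^l_n(1)=\overline{\eta}_nQ^l(1)>0$ be the relevant deterministic normalising constant, so that $\overline{\mu}^l_n(\varphi)=\mu^l_n(\varphi)/c_l=\Phi^l(\overline{\eta}_n)(\varphi)$. The plan is to compare $\overline{\mu}^{l,N}_n(\varphi)$ with $\overline{\mu}^l_n(\varphi)$ through the rescaled random quantities $\widetilde X:=\mu^{l,N}_n(\varphi)/c_l$ and $\widetilde Y:=\mu^{l,N}_n(1)/c_l$. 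The ratio identity
$$
\overline{\mu}^{l,N}_n(\varphi)-\overline{\mu}^l_n(\varphi)=\frac{\big(\widetilde X-\overline{\mu}^l_n(\varphi)\big)-\overline{\mu}^l_n(\varphi)\,(\widetilde Y-1)}{\widetilde Y},
$$
combined with the a priori bound $|\overline{\mu}^{l,N}_n(\varphi)-\overline{\mu}^l_n(\varphi)|\le 2\|\varphi\|$ (since $|F_\varphi|\le\|\varphi\|F_1$ pointwise, whence $|\overline{\mu}^{l,N}_n(\varphi)|\le\|\varphi\|$ and likewise for $\overline{\mu}^l_n$), lets one split the expectation over $\{\widetilde Y\ge\tfrac12\}$ and its complement, apply $1/\widetilde Y\le2$ on the first set and Markov's inequality $\mathbb P(|\widetilde Y-1|\ge\tfrac12)\le2\,\mathbb E|\widetilde Y-1|$ on the second, and thereby reduce the theorem to the two bounds $\mathbb E|\widetilde X-\overline{\mu}^l_n(\varphi)|\le C\|\varphi\|l/\sqrt N$ and $\mathbb E|\widetilde Y-1|\le Cl/\sqrt N$, the latter being the former at $\varphi\equiv1$.

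For $\mathbb E|\widetilde X-\overline{\mu}^l_n(\varphi)|$ I would write $\mu^{l,N}_n(\varphi)-\mu^l_n(\varphi)=A_n+B_n$, where, using $\mathbb E[X_k\mid\mathcal F_k]=\eta^N_kQ^l(\varphi)$ (a consequence of \eqref{eq:cond_exp_gamma}),
$$
A_n:=\tfrac1n\textstyle\sum_{k<n}(\eta^N_k-\eta_k)Q^l(\varphi),\qquad B_n:=\tfrac1n\textstyle\sum_{k<n}\big(X_k-\mathbb E[X_k\mid\mathcal F_k]\big).
$$
The term $A_n$ carries no $l$: under (A\ref{ass:smc}) the $G$-mixing of $Q$ gives, for a constant $C_1$ independent of $l$, $\lambda^l\le\sup_xQ^l(1)(x)\le C_1\inf_xQ^l(1)(x)\le C_1\lambda^l$, hence $\|Q^l(\varphi)/c_l\|\le C_1^2\|\varphi\|$; together with the standard time-uniform particle estimate $\sup_k\mathbb E|(\eta^N_k-\eta_k)(\psi)|\le C\|\psi\|/\sqrt N$ (\cite[Ch.~7]{delm}) this yields $\mathbb E|A_n/c_l|\le C\|\varphi\|/\sqrt N$.

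The term $B_n$ is where the factor $l$ appears, and it is handled by a path-wise martingale decomposition inside each lag-block. Telescoping conditional expectations, $X_k-\mathbb E[X_k\mid\mathcal F_k]=\sum_{j=1}^l\Delta_{k,j}$ with $\Delta_{k,j}:=\mathbb E[X_k\mid\mathcal F_{k+j}]-\mathbb E[X_k\mid\mathcal F_{k+j-1}]$; using $\mathbb E[\gamma^N_{k+l}(\varphi)\mid\mathcal F_{k+j}]=\gamma^N_{k+j}Q^{l-j}(\varphi)$ and $\Phi(\eta^N_{k+j-1})(\psi)=\eta^N_{k+j-1}Q(\psi)/\eta^N_{k+j-1}(G)$ one obtains the clean mutation-noise form
$$
\Delta_{k,j}=\Big(\prod_{p=k}^{k+j-1}\eta^N_p(G)\Big)\big(\eta^N_{k+j}-\Phi(\eta^N_{k+j-1})\big)\big(Q^{l-j}(\varphi)\big),
$$
whose prefactor is $\mathcal F_{k+j-1}$-measurable and which satisfies $\mathbb E[\Delta_{k,j}\mid\mathcal F_{k+j-1}]=0$. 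Hence for each fixed $j$, $\big(\sum_{k<n}\Delta_{k,j}\big)_n$ is a martingale, so $\mathbb E\big(\sum_{k<n}\Delta_{k,j}\big)^2=\sum_{k<n}\mathbb E[\Delta_{k,j}^2]$; the conditional variance of an $N$-sample empirical measure gives $\mathbb E[\Delta_{k,j}^2\mid\mathcal F_{k+j-1}]\le N^{-1}\mathrm{osc}(Q^{l-j}(\varphi))^2\big(\prod_{p=k}^{k+j-1}\eta^N_p(G)\big)^2$, with $\mathrm{osc}(Q^{l-j}(\varphi))\le2\|\varphi\|\sup Q^{l-j}(1)\le 2C_1\|\varphi\|\lambda^{l-j}$. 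Finally $\mathbb E\big[(\prod_{p=k}^{k+j-1}\eta^N_p(G))^2\big]=\mathbb E\big[(\gamma^N_{k+j}(1)/\gamma^N_k(1))^2\big]$, which conditionally on $\mathcal F_k$ equals $\mathbb E_{\eta^N_k}[(\gamma^N_j(1))^2]$ for a fresh particle system; by the classical linear-in-time relative-variance bound for particle normalising constants, uniform in the initial law under (A\ref{ass:smc}) (\cite[Ch.~7]{delm}), this is $\le(\eta^N_kQ^j(1))^2(1+cj/N)\le C_1^2\lambda^{2j}(1+cl/N)$. Since $c_l\ge\inf Q^l(1)\ge\lambda^l/C_1$, every power of $\lambda$ cancels and $\mathbb E[(\Delta_{k,j}/c_l)^2]\le C\|\varphi\|^2(1+cl/N)/N$ uniformly in $k$ and $j\le l$; summing over $k<n$ by martingale orthogonality and then over $j=1,\dots,l$ gives $\mathbb E|B_n/c_l|\le C\|\varphi\|\,l\sqrt{(1+cl/N)/N}$, and likewise for $\varphi\equiv1$.

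Collecting the pieces, $\mathbb E|\widetilde X-\overline{\mu}^l_n(\varphi)|\le C\|\varphi\|\,l\sqrt{(1+cl/N)/N}$ and $\mathbb E|\widetilde Y-1|\le C\,l\sqrt{(1+cl/N)/N}$. When $l\le N$ the factor $\sqrt{1+cl/N}$ is $O(1)$ and this is the desired $C\|\varphi\|l/\sqrt N$; when $l> N$ one instead invokes the trivial estimate $\mathbb E|\overline{\mu}^{l,N}_n(\varphi)-\overline{\mu}^l_n(\varphi)|\le2\|\varphi\|\le2\|\varphi\|\,l/\sqrt N$. Uniformity in $n$ is automatic, the only $n$-dependence being a harmless $1/\sqrt n$ in $\mathbb E|B_n/c_l|$. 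The genuinely delicate point — which forces both the $\mathbb L_2$/relative-variance route and the trivial fallback — is that pointwise the weights $\prod_{p=k}^{k+j-1}\eta^N_p(G)$ can be as large as $(\sup G)^j$ while $c_l$ is only of order $(\inf G)^l$, so any worst-case argument would produce a spurious $(\sup G/\inf G)^l$; one must exploit that, in mean square, these weights concentrate around $\lambda^{j}$ with only a mild $1+O(l/N)$ inflation, and — since that inflation is unbounded once $l\gg N$ — revert to the $\mathcal O(\|\varphi\|)$ bound in that regime.
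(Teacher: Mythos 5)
Your proposal is essentially correct and, structurally, it rediscovers the paper's argument: your ratio reduction plays the role of the paper's $T_1+T_2$ split in the proof of Theorem \ref{theo:l1} (though the paper's version is cleaner, since the deterministic bound $\mu^{l,N}_n(\varphi)/\mu^{l,N}_n(1)\leq\|\varphi\|$ removes any need to truncate on $\{\widetilde Y\geq \tfrac12\}$), your $A_n$ is the paper's $T_1$, and your telescoped increments $\Delta_{k,j}=\big(\gamma^N_{k+j}(1)/\gamma^N_k(1)\big)\big(\eta^N_{k+j}-\Phi(\eta^N_{k+j-1})\big)(Q^{l-j}(\varphi))$ are exactly the decomposition of Lemma \ref{lem:decomp}. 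Where you genuinely diverge is in how the fluctuation term is closed. The paper stays in $\mathbb{L}_1$: it conditions on $\mathcal{F}_{p+k-1}$, takes absolute values term by term (accepting the loss of the $1/\sqrt{n}$ that orthogonality would give), and then only needs the \emph{first} moment $\mathbb{E}[\gamma^N_{p+k}(1)/\gamma^N_p(1)]=\mathbb{E}[\eta^N_p(Q^k(1))]$, which is exact by the unbiasedness identity \eqref{eq:cond_exp_gamma} and costs nothing. You instead square, invoke martingale orthogonality over $k$, and therefore must control $\mathbb{E}\big[(\gamma^N_{k+j}(1)/\gamma^N_k(1))^2\big]$ --- precisely the quantity the paper's remark after Lemma \ref{tech:lem} singles out as the obstruction to any $\mathbb{L}_q$, $q\geq 2$, extension. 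Your route buys an extra factor $1/\sqrt{n}$ on the fluctuation term and correctly identifies the $\sup G/\inf G$ trap, but at the price of importing the linear-in-time relative-variance bound for particle normalising constants and of the artificial case split $l\leq N$ versus $l>N$, neither of which the paper needs.

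The one soft spot is that imported ingredient. The bound $\mathbb{E}\big[(\gamma^N_j(1)/\gamma_j(1))^2\big]\leq 1+cj/N$ is a real theorem under (A\ref{ass:smc})-type conditions, but it lives in \cite{cerou} and \cite{whiteley3}, not in \cite[Ch.~7]{delm}, and you apply it to a particle system restarted from the \emph{random configuration} $\xi_k$ rather than from i.i.d.\ draws of a fixed law; the uniformity of the constants over such initial configurations is plausible under (A\ref{ass:smc}) but is asserted rather than checked. As written this is a citation-level gap rather than a logical one, and it is avoidable: replacing your $\mathbb{L}_2$ step by the paper's conditional $\mathbb{L}_1$ step (bounding $\mathbb{E}[|\Delta_{k,j}|]$ directly via the conditional standard deviation $\mathrm{osc}(Q^{l-j}(\varphi))/\sqrt{N}$ times the exactly computable first moment of the weight) yields the stated $C\|\varphi\|l/\sqrt{N}$ with no second moments and no case split.
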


\begin{proof}
We have:
$$
\overline{\mu}^{l,N}_n(\varphi)
-
\overline{\mu}^{l}_n(\varphi)
= T_1 + T_2
$$
where
\begin{eqnarray*}
T_1 & = & \frac{\mu^{l,N}_n(\varphi)}{\mu^{l,N}_n(1)\mu^{l}_n(1)}\left(\mu^{l}_n(1)-\mu^{l,N}_n(1)\right) \\
T_2 & = & \frac{1}{\mu^{l}_n(1)}\left(\mu^{l,N}_n(\varphi)-\mu^{l}_n(\varphi)\right).
\end{eqnarray*}
The proof is easily completed by using Minkowski, the fact that
$$
\frac{\mu^{l,N}_n(\varphi)}{\mu^{l,N}_n(1)} \leq \|\varphi\|
$$
and Lemma \ref{tech:lem}.
\end{proof}

\begin{rem}\label{rem:bias}
If one wants to consider $\mathbb{E}\left[\left|
\overline{\mu}^{l,N}_n(\varphi)-
\eta_{\infty}(\varphi)\right|\right]$ then by using results on the rates of the bias (see \cite{dd04,whiteley})
one would,  under (A\ref{ass:smc}),  have an upper-bound of the type:
$$
\mathcal{O}\left(\frac{l}{\sqrt{N}} + \kappa^n\right)
$$
for $\kappa\in(0,1)$.
\end{rem}

Theorem \ref{theo:l1} shows that one cannot naively increase $l$ so as to obtain a small error (as in 
Corollary \ref{cor:main_cor}) and there is at most a linear increase in the non-asymptotic $\mathbb{L}_1-$error. On the basis
of Corollary \ref{cor:main_cor} and Theorem \ref{theo:l1} one expects that the asymptotic (in $n$) $\mathbb{L}_1-$error
is at most
$$
\mathcal{O}\left(\min\left\{\frac{e^{-\kappa l}}{N},\frac{l}{\sqrt{N}}\right\}\right)
$$
so that again one can choose $l=\mathcal{O}(-\log(\epsilon))$, $N=\mathcal{O}(1)$ to obtain the error
as $\mathcal{O}(\epsilon)$ ($\epsilon\in(0,1)$). 
That is,  there is not a contradiction with our previous discussion.  None-the-less,
the afore-mentioned points are based upon asymptotics in $n$.  For instance,  combining 
Theorem \ref{theo:l1} and Remark \ref{rem:bias} choose $n=\mathcal{O}(-\log(\epsilon))$,  $l=f(\epsilon)$,  where
$f$ is a non-decreasing function which explodes at zero and $N=\mathcal{O}(\epsilon^{-2}f(\epsilon)^2)$ would achieve a non-asymptotic $\mathbb{L}_1-$error of $\mathcal{O}(\epsilon)$.  This again establishes the flexibility of the estimator where one can of course control the error with $N$ if needed.

\subsection{Central Limit Theorem}

Whilst the previous results give a characterization of the bias and variance for large $n$, one can give exact expressions of this asymptotic error.  This is done in the following central limit theorem.

We require several notations,  which are given now.  For 
$(\xi_0,\dots,\xi_{l},\varphi)\in\mathsf{E}^{N(l+1)}\times\mathcal{B}_b(\mathsf{E})$ define:
\begin{equation}\label{eq:pois_main}
\widehat{F}_{\varphi}(\xi_0,\dots,\xi_{l})  := F_{\varphi}(\xi_0,\dots,\xi_{l}) - \pi^{N}(Q^l(\varphi)) 
+ \sum_{q=1}^{\infty} \left\{\mathbb{E}\left[\Phi(\eta_{(q-1)l}^N)(Q^l(\varphi))|\Xi_0=\xi_{l}\right]- \pi^{N}(Q^l(\varphi))\right\}
\end{equation}
where $\mathbb{E}[\cdot|\Xi_0=\xi_{l}]$ is the expectation w.r.t.~the law associated to the DMC algorithm,  with the initial particles at time zero equal to $\xi_{l}$.  Using (A\ref{ass:smc}) one can show that the function $\widehat{F}_{\varphi}$ is upper-bounded by a constant (which would explode as $N$ grows).
Now define for $(q,r)\in\{1,\dots,n\}\times\{0,\dots,l\}$
\begin{eqnarray}
(R^N)^{\otimes(l-r)}(\widehat{F}_\varphi)(\xi_{q},\dots,\xi_{q+r}) & := &  \int_{\mathsf{E}^{N(l-r)}}\widehat{F}_{\varphi}(\xi_q,\dots,\xi_{q+l})
\prod_{j=q+r+1}^{q+l}R^N(\xi_{j-1},d\xi_j)\label{eq:mainprod1}\\
(R^N)^{\otimes(l+1-r)}(\widehat{\varphi})(\xi_q,\dots,\xi_{q+r-1}) & := &  \int_{\mathsf{E}^{N(l+1-r)}}\widehat{F}_{\varphi}(\xi_q,\dots,\xi_{q+l})
\prod_{j=q+r}^{q+l}R^N(\xi_{j-1},d\xi_j)\label{eq:mainprod2}
\end{eqnarray}
with the convention that $(\xi_{q},\xi_{q-1}) = \xi_{q-1}$.  We will also write 
\begin{equation}
(R^N)^{\otimes(0)}(\widehat{\varphi})(\xi_q,\dots,\xi_{q+l}) = \widehat{F}_{\varphi}(\xi_q,\dots,\xi_{q+l}).\label{eq:mainprod3}
\end{equation}
Next, for $(l,k,\varphi)\in\mathbb{N}\times\{l,l+1,\dots\}\times\mathcal{B}_b(\mathsf{E})$
\begin{align}
\sigma^2_{F_{\varphi}} & :=  \int_{\mathsf{E}^{Nl}} \tilde{F}_{\varphi}(\xi_{k-l},\dots,\xi_{k-1})
\Pi^N(d\xi_{k-l})
\prod_{s=k-l+1}^{k-1} R^N(\xi_{s-1},d\xi_s) \label{eq:mainclt_av1}
\end{align}
\begin{align}
\tilde{F}_{\varphi}(\xi_{k-l},\dots,\xi_{k-1})
 & =  \int_{\mathsf{E}^N}\left(\sum_{j=0}^{l} (R^N)^{\otimes(l-j)}(\widehat{F}_{\varphi})(
\xi_{k-j},\dots,\xi_k)\right)^2 R^N(\xi_{k-1},d\xi_k) - \nonumber\\
& \left(\sum_{j=0}^{l} (R^N)^{\otimes(l+1-j)}(\widehat{F}_{\varphi})(\xi_{k-j},\dots,\xi_{k-1})
\right)^2.\label{eq:mainclt_av2}
\end{align}
Finally set
\begin{align*}
\overline{F}_{\varphi}(\xi_0,\dots,\xi_l) & := \frac{1}{\pi^{N}(Q^l(1))}F_{\varphi}(\xi_0,\dots,\xi_l) - 
\frac{\pi^{N}(Q^l(\varphi))}{\pi^{N}(Q^l(1))^2}F_{1}(\xi_0,\dots,\xi_l) \\
\widehat{\overline{F}}_{\varphi}(\xi_0,\dots,\xi_{l}) & :=  \frac{1}{\pi^{N}(Q^l(1))}\widehat{F}_{\varphi}(\xi_0,\dots,\xi_l) - 
\frac{\pi^{N}(Q^l(\varphi))}{\pi^{N}(Q^l(1))^2}\widehat{F}_{1}(\xi_0,\dots,\xi_l).
\end{align*}
Below we use $\xrightarrow[]{d}$ to denote convergence in distribution as $n$ increases. 

\begin{theorem}\label{theo:clt1}
    Assume (A\ref{ass:smc}). For any $(N,l,\varphi)\in\mathbb{N}^2\times\mathcal{B}_b(\mathsf{E})$ we have
\begin{equation*}
\sqrt{n}\left(\overline{\mu}^{l,N}_n(\varphi)
-
\Phi^l(\pi^{N})\left(\varphi\right)
\right) 
\xrightarrow[]{d} \mathcal{N}(0, \sigma_{\overline{F}_{\varphi}}^2).
\end{equation*}
\end{theorem}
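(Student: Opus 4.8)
The plan is to recognize the numerator $\mu^{l,N}_n(\varphi)$ and denominator $\mu^{l,N}_n(1)$ as time-averages of a fixed bounded functional $F_{\varphi}(\xi_k,\dots,\xi_{k+l})$ of a block of $(l+1)$ consecutive states of the uniformly ergodic Markov chain $(\xi_n)_{n\ge0}$ on $\mathsf{E}^N$, and then to deploy a Markov-chain central limit theorem for additive functionals, followed by the $\delta$-method to pass from the pair $(\mu^{l,N}_n(\varphi),\mu^{l,N}_n(1))$ to the ratio $\overline{\mu}^{l,N}_n(\varphi)$. First I would note that, by Proposition~\ref{prop:inv_dist} and the exchangeability computation already carried out in the proof of Theorem~\ref{prop:main_prop}, the time-average $\mu^{l,N}_n(\varphi)$ converges almost surely and in $\mathbb{L}_2$ to $\Pi^N(m(Q^l(\varphi))) = \pi^N(Q^l(\varphi))$, so the natural centering for $\sqrt n\,\mu^{l,N}_n(\varphi)$ is $\pi^N(Q^l(\varphi))$. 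The centered sum $\sum_{k=0}^{n-1}\{F_{\varphi}(\xi_k,\dots,\xi_{k+l}) - \pi^N(Q^l(\varphi))\}$ is handled via the Poisson-equation/martingale-plus-remainder decomposition alluded to in the Remark after Corollary~\ref{cor:main_cor}: one solves the Poisson equation for the chain $(\xi_n)$ with the block functional, the solution being exactly the $\widehat{F}_{\varphi}$ defined in \eqref{eq:pois_main} (the telescoping series converges geometrically by the uniform ergodicity coming from (A\ref{ass:smc})). This yields a martingale array whose quadratic variation, under the stationary chain started from $\Pi^N$, converges to the asymptotic variance; the objects \eqref{eq:mainprod1}--\eqref{eq:mainclt_av2} are precisely the pieces of this quadratic variation, accounting for the $(l+1)$-step overlap of consecutive blocks (this is why $\widehat{F}_{\varphi}$ must be "unrolled" over the $R^N$ kernels in $r=0,\dots,l$ stages). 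I would invoke a standard martingale CLT (e.g. the Lindeberg–Lévy form for stationary ergodic martingale differences), using boundedness of $\widehat{F}_{\varphi}$ for the Lindeberg condition, to conclude
\[
\sqrt n\,\Big(\mu^{l,N}_n(\varphi) - \pi^N(Q^l(\varphi))\Big) \xrightarrow{d} \mathcal{N}(0,\sigma^2_{F_{\varphi}}),
\]
and more generally joint asymptotic normality of the vector $\big(\sqrt n(\mu^{l,N}_n(\varphi)-\pi^N(Q^l(\varphi))),\ \sqrt n(\mu^{l,N}_n(1)-\pi^N(Q^l(1)))\big)$ by applying the same argument to the linear combination defining $\overline{F}_{\varphi}$ via the Cramér–Wold device.

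The second step is the $\delta$-method. Writing $\overline{\mu}^{l,N}_n(\varphi) = \mu^{l,N}_n(\varphi)/\mu^{l,N}_n(1)$ and expanding the smooth map $(a,b)\mapsto a/b$ around $(\pi^N(Q^l(\varphi)),\pi^N(Q^l(1)))$, the limiting variance is that of the linearization
\[
\frac{1}{\pi^N(Q^l(1))}\Big(\mu^{l,N}_n(\varphi)-\pi^N(Q^l(\varphi))\Big) - \frac{\pi^N(Q^l(\varphi))}{\pi^N(Q^l(1))^2}\Big(\mu^{l,N}_n(1)-\pi^N(Q^l(1))\Big),
\]
which is exactly the time-average of the centered functional $\widehat{\overline{F}}_{\varphi}$; hence the asymptotic variance is $\sigma^2_{\overline{F}_{\varphi}}$, defined by formulae \eqref{eq:mainclt_av1}--\eqref{eq:mainclt_av2} with $F_{\varphi}$ replaced by $\overline{F}_{\varphi}$ (equivalently $\widehat{F}_{\varphi}$ by $\widehat{\overline{F}}_{\varphi}$), and by Theorem~\ref{prop:main_prop} the centering $\pi^N(Q^l(\varphi))/\pi^N(Q^l(1)) = \Phi^l(\pi^N)(\varphi)$ matches the statement. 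Positivity of $\pi^N(Q^l(1))\ge \lambda^l>0$ (bounded below under (A\ref{ass:smc})) guarantees the map is smooth on a neighbourhood of the limit point, and the almost-sure convergence of $\mu^{l,N}_n(1)$ makes the remainder term $o_{\mathbb{P}}(1/\sqrt n)$ by Slutsky.

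The main obstacle is the first step, specifically verifying that the series in \eqref{eq:pois_main} genuinely solves the Poisson equation for the block functional and produces a square-integrable martingale: because consecutive summands $F_{\varphi}(\xi_k,\dots,\xi_{k+l})$ and $F_{\varphi}(\xi_{k+1},\dots,\xi_{k+1+l})$ share $l$ coordinates, the natural filtration is $\mathcal{G}_k = \sigma(\xi_0,\dots,\xi_{k+l})$ rather than $\sigma(\xi_0,\dots,\xi_k)$, and the martingale-difference array has a staggered/overlapping structure — this is precisely what the multi-stage unrolling \eqref{eq:mainprod1}--\eqref{eq:mainprod3} is organizing, and getting the bookkeeping of these $l+1$ layers right (so that the telescoping in $q$ cancels correctly and the quadratic-variation limit is identified with \eqref{eq:mainclt_av1}) is the delicate part. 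The geometric convergence of the correction series, the uniform-in-$k$ bound on $\widehat{F}_{\varphi}$, and ergodicity of $(\xi_n)$ under $\Pi^N$ all follow from (A\ref{ass:smc}) exactly as in \cite{delm}, and are comparatively routine once the decomposition is in place. I would also remark that, since the chain $(\xi_n)$ may be started from $\eta_0^{\otimes N}$ rather than $\Pi^N$, one needs the standard argument that a burn-in of the non-stationary initial segment contributes $O(1/\sqrt n)\to0$ and does not affect the limit law — again immediate from uniform ergodicity.
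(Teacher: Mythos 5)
Your proposal follows essentially the same route as the paper: the paper reduces Theorem \ref{theo:clt1} to Proposition \ref{prop:clt_2}, which is proved via the delta method applied to the linearized functional $\overline{F}_{\varphi}$, with the underlying scalar CLT (Proposition \ref{prop:clt_1}) obtained from exactly the path-wise Poisson-equation/martingale-plus-remainder decomposition and the layered martingale array $\sum_{k}\sum_{j=0}^{l-1}\xi_{k-j,j}$ handling the $(l+1)$-block overlap that you describe, concluded by the Hall--Heyde martingale array CLT with the conditional-variance limit identified through the LLN of Proposition \ref{prop:mc}. The only cosmetic difference is that you invoke Cram\'er--Wold for joint normality where the paper applies the scalar argument directly to the single linear combination $\widehat{\tau}$ produced by the delta method; both are correct.
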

\begin{proof}
This is Proposition \ref{prop:clt_2} in the appendix,  amended to the notation used in the main text.  For instance,  one can compare \eqref{eq:mainprod1}-\eqref{eq:mainprod3} with \eqref{eq:prod1}-\eqref{eq:prod3} and
\eqref{eq:mainclt_av1}-\eqref{eq:mainclt_av2} with \eqref{eq:clt_av1}-\eqref{eq:clt_av2}.
\end{proof}

\begin{rem}
We note that 
$$
\sqrt{n}\left|\Phi^l(\pi^{N})\left(\varphi\right)-\eta_{\infty}(\varphi)\right|
\leq \frac{C\sqrt{n}\exp\{-\kappa l\}}{N}
$$
which means one must choose $l$ and $N$ appropriately to control $\sqrt{n}\left|\Phi^l(\pi^{N})\left(\varphi\right)-\eta_{\infty}(\varphi)\right|$.  For instance
one could have $n=\mathcal{O}(\epsilon^{-2})$, $l=\mathcal{O}(-\log(\epsilon))$ and $N=\mathcal{O}(1)$
to make this latter term small for large $n$.
\end{rem}

%Theorem \ref{theo:clt1} reinforces what was shown in Theorem \ref{prop:main_prop},  in that there is an intrinsic large $n$ bias and that can be characterized as being exponentially small in $l$.  
%Combining Theorem \ref{theo:clt1},  Theorem \ref{prop:main_prop} and Corollary \ref{cor:main_cor}
%one might expect that
%$$
%\left|\sqrt{n}\mathbb{E}\left[\overline{\mu}^{l,N}_n(\varphi)-\eta_{\infty}(\varphi)\right]\right| = \mathcal{O}\left(\frac{1}{\sqrt{n}}
%+ \frac{\sqrt{n}\exp\{-\kappa l\}}{N}
%\right)
%$$
%so to ensure that this error is $\mathcal{O}(\epsilon)$ ($\epsilon\in(0,1)$) one should have $n=\mathcal{O}(\epsilon^{-2})$, $l=\mathcal{O}(-\log(\epsilon))$ and $N=\mathcal{O}(1)$.
Theorem \ref{theo:clt1} gives an \emph{exact} description of the errors of the estimator in $N$ and $l$ through the expression of the asymptotic variance $\sigma_{\overline{F}_{\varphi}}^2$.  As is typical in Markov chain CLTs,  this latter variance is written in terms of a solution to the Poisson equation (see e.g.~\cite{glynn}) as in 
\eqref{eq:pois_main}.  $\sigma_{\overline{F}_{\varphi}}^2$ is further complicated as one has the form of a ratio of lagged Markov chain estimators.  In general one would need numerical methods to approximate the asymptotic variance and this is considered below.  
Understanding how $\sigma_{\overline{F}_{\varphi}}^2$ behaves as a function of $N,l$ is an important mathematical question,  which requires further investigation, but one might expect that it is related to
Theorem \ref{theo:l1} and this is then $\mathcal{O}(l^2/N)$.

To conclude this section on CLTs one can also give a result related to an approach that is considered in the literature.  In some cases,  authors have proposed running two different (independent) DMC algorithms, one that gives the estimate 
$\mu^{l,N}_n(\varphi)$ and the other which gives the estimate $\mu^{l,N}_n(1)$. In this case defining
$$
\sigma_{\varphi}^{2,\textrm{ind}} = \frac{1}{\pi^{N}(Q^l(1))}\sigma^2_{F_{\varphi}}
+\frac{\pi^{N}(Q^l(\varphi))^2}{\pi^{N}(Q^l(1))^4} \sigma^2_{F_{1}}
$$
one can apply Proposition \ref{prop:clt_3} in the appendix to show that the asymptotic variance for such estimators (under (A\ref{ass:smc}))
is exactly $\sigma_{\varphi}^{2,\textrm{ind}}$.  In general,  one expects that $\sigma_{\overline{F}_{\varphi}}^2<\sigma_{\varphi}^{2,\textrm{ind}}$
but proving this is rather difficult.  We consider a numerical comparison below.

\subsection{Numerical Study}

Consider the quantum harmonic oscillator Hamiltonian
$$H=-\frac{1}{2m}\nabla^2 + \frac{1}{2}m\omega^2x^2.$$
The ground state energy (smallest eigenvalue) of the operator $H$ is known to be $E_0=\left(n+\frac{1}{2}\right)\omega$. Following the discussion in Appendix A, we set $\tau=1/16$ (time discretization), $\omega=1$, and $m=1$. Define $G(x)=e^{-\tau x^2/2}$ and $M(\cdot,dx)$ to be the Brownian motion transition kernel (over time unit $\tau$).  The operator $Q$ is given by $Q=e^{-\tau H}$ and the largest eigenvalue of the operator $Q$ is given by $\lambda = e^{-\tau E_0}$.

We ran the DMC algorithm and calculated the fixed lag estimator\eqref{eq:main_est} with $\varphi=G$. We varied the lag values from $0$ to $50$, with $N=10$ and $n=50000$.  In order to calculate the bias and the variance we conducted $128$ independent runs.  For each lag value, we obtain an estimator of $\lambda$.
Lag $0$ case corresponds to the standard method the DMC particles are used to estimate $\lambda$. Non-zero lag values correspond to the estimators studied in this paper.  Corollary \ref{cor:main_cor} predicts that for a fixed number of particles $N$ and a large $n$, the bias will decay at an exponential rate.  Figure \ref{fig:bias} presents two plots, one for the absolute bias and one for the log absolute bias each plotted against the different lag values. The figure exhibits a linear trend in the log absolute bias plot which affirms the statement about the exponential decaying bias stated in Corollary \ref{cor:main_cor}.

\begin{figure}[h!]
    \centering
    \includegraphics[width=0.45\linewidth]{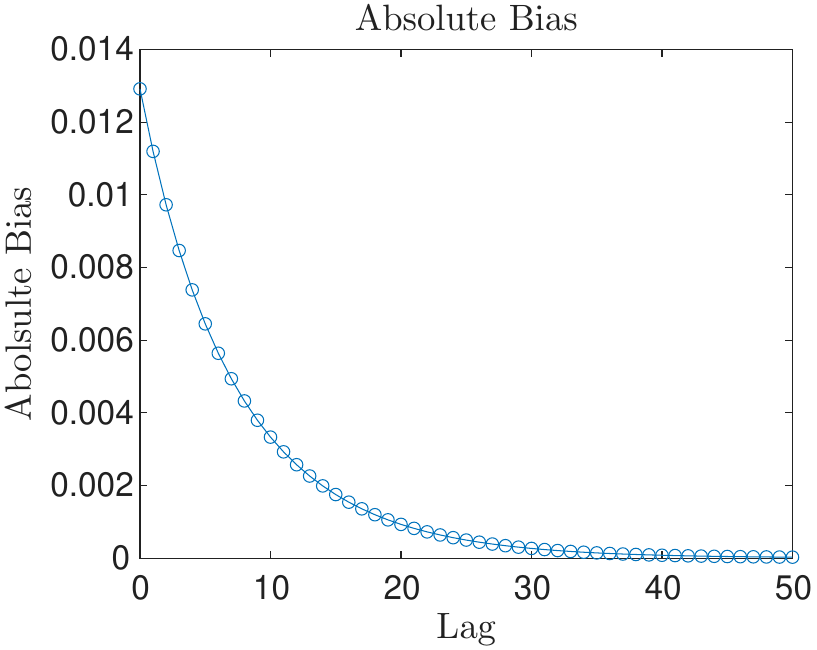}
    \includegraphics[width=0.45\linewidth]{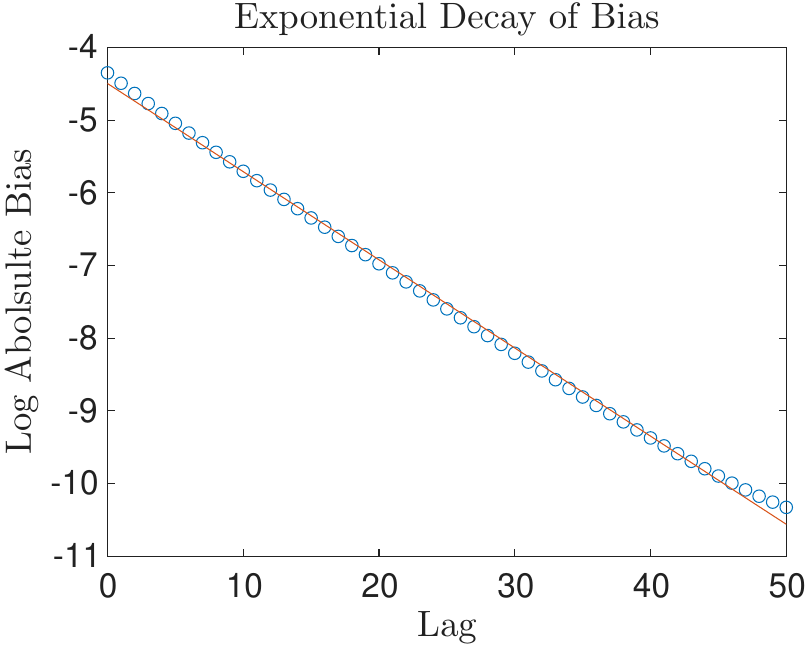}
    \caption{Bias Comparison.  Left: Absolute Bias vs Lag. Right: Log Aboslute Bias vs Lag.}
    \label{fig:bias}
\end{figure}

Let $(\xi_i)_{i\geq0}$ and $(\tilde{\xi_i})_{i\geq0}$ be identically distributed and independent DMC particles. Consider the following two estimators:
\begin{equation*}
    \bar{\mu}_{n}^{l,N} = \frac{\frac{1}{n}\sum_{0\leq k\leq n}F_{G}(\xi_k,\dots,\xi_{k+l})}{\frac{1}{n}\sum_{0\leq k\leq n}F_{1}(\xi_k,\dots,\xi_{k+l})}, \quad\quad
    \tilde{\mu}_{n}^{l,N} = \frac{\frac{1}{n}\sum_{0\leq k\leq n}F_{G}(\xi_k,\dots,\xi_{k+l})}{\frac{1}{n}\sum_{0\leq k\leq n}F_{1}(\tilde{\xi}_k,\dots,\tilde{\xi}_{k+l})}.
\end{equation*}
Figure \ref{fig:var} shows the variance of both estimators under the same setting used for the bias simulation above. The simulation results indicate that the variance of the estimator $\bar{\mu}_n^{l,N}$ is much smaller than the variance of $\tilde{\mu}_n^{l,N}$. This is because we expect that the numerator and denominator of $\bar{\mu}_n^{l,N}$ are highly positively correlated which reduces the overall variance of the estimator $\bar{\mu}_n^{l,N}$. Moreover, comparing Figures \ref{fig:bias} and \ref{fig:var} reveals an opposing trend between the bias and the variance as the lag value increases. The bias exponentially decreases, while the variance increases.

\begin{figure}[h!]
    \centering
    \includegraphics[width=0.45\linewidth]{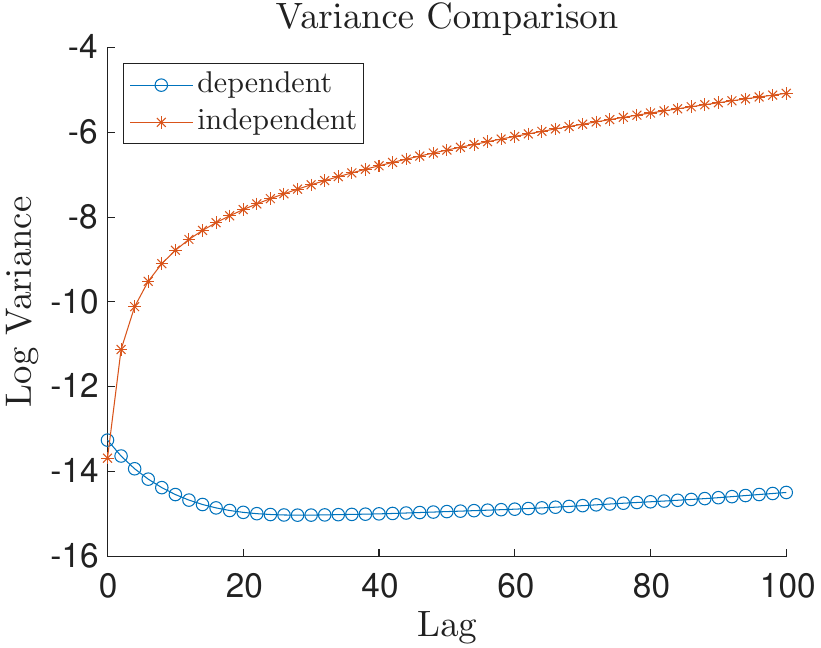}
    \caption{Variance Comparison.}
    \label{fig:var}
\end{figure}

\subsubsection*{Acknowledgements}

AJ was supported by CUHK-SZ UDF01003537.
MC thanks the European Research Council (ERC)
under the European Union's Horizon 2020 research and
innovation programme (grant agreement no. 863481) for
financial support.

\appendix

\section{Feynman-Kac formula in physics}
\label{FKP}
In physics we are interested in solving the imaginary-time dependent Sch\"odinger equation
\be
\frac{ \partial \psi({\bf x},t) }{\partial t} = - H \psi({\bf x},t)
\label{eq1}
\ee
with some function $\psi({\bf x},0)$ as initial condition and ${\bf x} \in \mathsf{E}={\mathbb{R}}^d$ with $d \ge 1$. 
Here, $H$ is the Schr\"odinger Hamiltonian given by
$$
H = -\frac{1}{2} \nabla^2 + V({\bf x}).
$$
The solution of Eq.(\ref{eq1}) is expressed using the Feynman-Kac formula as follows
$$
\psi({\bf x},t) =\mathbb{E}_{{\bf x}}[e^{-\int_0^t ds V({\bf X}(s))}]
$$
where the expectation $\mathbb{E}_{{\bf x}}$ is with respect to the law of the Brownian process with initial distribution $\psi({\bf x},0)$ and 
final condition ${\bf X}(t)={\bf x}$.
Using the notations of the present article, this FK formula can be put into the form 
$$
\gamma_t(\phi) \equiv \int d{\bf x} \phi({\bf x}) \psi({\bf x},t) = \mathbb{E}[  \phi({\bf X}(t)) e^{-\int_0^t ds V({\bf X}(s))}].
$$
In DMC the continuous time variable needs to be discretized. Writing $t= n\tau$ where $\tau$ is a time-step (a "small" positive quantity), 
$\gamma_t(\phi)$ is approximated by
\be
\gamma_n(\phi) = \mathbb{E}[  \phi({\bf X}_n) \prod_{p=0}^{n-1} G({\bf X}_p)]
\label{eq2}
\ee
with $G({\bf x})= e^{-\tau V({\bf x})}$ and initial distribution $\eta_0({\bf x})= \psi({\bf x},0)$.
As seen, $\gamma_n(\phi)$ is the unrenormalized Feynman-Kac measure introduced in this work, see section \ref{FKSG}, with operator $Q$ given by $e^{-tH}$. 

In physics where the number of degrees of freedom $d$ of the quantum systems studied is large (typically, $d$ is proportional to the number of 
{\it physical} particles), DMC calculations using Eq.(\ref{eq2}) are just unfeasible because of the uncontrolled fluctuations 
of $G({\bf x})$. In practice, this problem is solved by introducing importance sampling.
Let $\psi_G:\mathsf{E}\rightarrow\mathbb{R}^+$ be a so-called guiding wavefunction (in practice, a good approximation of the ground-state wavefunction).
Let us introduce the distribution $f({\bf x},t)= \psi_G({\bf x}) \psi({\bf x},t)$. 
The partial differential equation obeyed by $f$ is easily shown to be
$$
\frac{ \partial f({\bf x},t) }{\partial t} = L f({\bf x},t) -E_L({\bf x}) f({\bf x},t)
$$
where $L$ is a Fokker-Planck operator written as
\be
L = \frac{1}{2} \nabla^2 - \nabla [ {\bf b} .]
\label{FP}
\ee
where the drift vector is ${\bf b}({\bf x})=\frac{ \nabla \psi_G}{\psi_G}$ and $E_L({\bf x})$ is a new potential function, called the local energy, given by
\be
E_L({\bf x}) = \frac{ H \psi_G}{\psi_G}.
\label{el}
\ee
The FK formula, Eq.(\ref{eq2}), becomes
$$
\gamma_n(\psi_G \phi) = \mathbb{E}[  \phi({\bf X}_n) \prod_{p=0}^{n-1} G({\bf X}_p)]
$$
where the expectation, $\mathbb{E}$, now refers to the law of the {\it drifted} Brownian process associated with the 
Fokker-Planck operator, Eq.(\ref{FP}), the weight is $G(x)= e^{-\tau E_L({\bf x})}$, and the initial condition is $\eta_0({\bf x})= 
\psi_G({\bf x}) \psi({\bf x},0)$. Note that the operator $Q$ is now given by $Q=e^{-\tau \psi_G H \frac{1}{\psi_G}}$.

Introducing importance sampling has two important consequences which make in practice the DMC simulations feasible.
First, the drift contribution of the Markow kernel 
"pushes" the particles toward the regions where $\Psi_G$ is important (large), thus avoiding to accumulate statistics in regions where the product
$\prod_{p=0}^{n-1} G({\bf X}_p)$ is (very) small.
Second, the statistical fluctuations of the weights are much reduced since the variations of the local energy, Eq.(\ref{el}), are directly related to 
the quality of the approximate guiding wavefunction (no fluctuations when $\psi_G$ is the exact ground-state).

\section{Proofs}

\subsection{Structure of Appendix}

This appendix is split into two sections.  The first is Section \ref{app:tech_mc} which considers
some technical results for Markov chains,  which are applied particularly for the proof of Theorem \ref{prop:main_prop} and also  Section \ref{app:clt} used for the proof of Theorem \ref{theo:clt1}.
Section \ref{app:l1_proof} houses all the proofs for Theorem \ref{theo:l1}. Throughout this appendix
$C$ is generic finite constant 
whose value may change from line-to-line of the computations 
and any dependencies in terms of the parameters of DMC or of FK models
will be made clear in each context.

\subsection{Technical Results for Markov Chains}\label{app:tech_mc}

The notations in this section,  in particular the symbols used,  should be taken as independent of the main text.
The reason for this is that these results are of independent interest and do not need the notation of DMC to be written.
We consider a time-homogenous Markov chain on general state-space $(\mathsf{X},\mathcal{X})$
of initial distribution $\mu$ and kernel $K$, with the latter having invariant measure $\pi$. Let $l\in\mathbb{N}$ be fixed and consider for $\varphi\in\mathcal{B}_b(\mathsf{X})$ 
$$
\frac{1}{n}\sum_{q=1}^n \varphi\left(X_{q:q+l-1}\right).
$$
We shall prove, under assumptions, convergence of the above quantity to
$$
\pi\otimes K^{\otimes(l-1)}(\varphi) = \int_{\mathsf{X}^l} \varphi(x_{1:l})\pi(dx_1)\prod_{j=2}^{l}K(x_{j-1},dx_j)
$$
where we use the notation $x_{1:l}=(x_1,\dots,x_l)$.

\begin{hypH}\label{ass:mc1}
There exists a $(\epsilon,\nu)\in(0,1)\times\mathcal{P}(\mathsf{X})$ such that for any $(x,A)\in\mathsf{X}\times\mathcal{X}$ we have
$$
\int_A K(x,dy) \geq \epsilon\int_A \nu(dy).
$$
\end{hypH}

\begin{prop}\label{prop:mc}
Assume (H\ref{ass:mc1}). Then for any $p\geq 1$ there exist a $C<+\infty$ such that for any $(n,l,\varphi)\in\mathbb{N}^2\times\mathcal{B}_b(\mathsf{X}^l)$:
$$
\mathbb{E}\left[\left|\frac{1}{n}\sum_{q=1}^n \varphi\left(X_{q:q+l-1}\right)-\pi\otimes K^{\otimes(l-1)}(\varphi)\right|^p\right]^{1/p} \leq \frac{C\|\varphi\|l\epsilon^{-1}}{\sqrt{n}}
$$
where $\epsilon$ is as (H\ref{ass:mc1}) and $\|\varphi\|=\sup_{x\in\mathsf{X}^l}|\varphi(x)|$.
\end{prop}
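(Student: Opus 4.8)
The plan is to recognise the displayed average as an ergodic average for an auxiliary \emph{path} Markov chain and then to control its fluctuation by a Poisson-equation together with a martingale-plus-remainder decomposition; the factor $l\epsilon^{-1}$ will come from an $l$-step minorisation.

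First I would set $Y_q:=X_{q:q+l-1}\in\mathsf{X}^l$. Since the conditional law of $Y_{q+1}$ given the past depends only on the last coordinate $X_{q+l-1}$ of $Y_q$, the process $(Y_q)_{q\geq1}$ is a time-homogeneous Markov chain on $(\mathsf{X}^l,\mathcal{X}^{\otimes l})$ with some kernel $\overline K$, and a direct computation using $\pi K=\pi$ shows that $\overline\pi:=\pi\otimes K^{\otimes(l-1)}$ is $\overline K$-invariant. Hence $\tfrac1n\sum_{q=1}^n\varphi(X_{q:q+l-1})=\tfrac1n\sum_{q=1}^n\varphi(Y_q)$, and the claim becomes an $\mathbb{L}_p$-bound on $\big|\tfrac1n\sum_{q=1}^n\{\varphi(Y_q)-\overline\pi(\varphi)\}\big|$, for which the initial law of $X$ plays no role.

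The key observation is that although $\overline K$ need not minorise in one step (its first $l-1$ output coordinates are deterministic shifts), iterating (H\ref{ass:mc1}) $l$ times yields, for every $y\in\mathsf{X}^l$, $\overline K^{l}(y,\cdot)\geq\epsilon\,\big(\nu\otimes K^{\otimes(l-1)}\big)(\cdot)$, so the Dobrushin coefficient of $\overline K^{l}$ is at most $1-\epsilon$ and $\mathrm{osc}(\overline K^{l}g)\leq(1-\epsilon)\,\mathrm{osc}(g)$. Writing $\widetilde\varphi:=\varphi-\overline\pi(\varphi)$, I would define $\widehat\varphi:=\sum_{k\geq0}\overline K^{k}\widetilde\varphi$; grouping this series into blocks of length $l$, using that each $\overline K^{r}$ is an $\mathrm{osc}$-contraction, that $\mathrm{osc}(\overline K^{jl}h)\leq(1-\epsilon)^j\mathrm{osc}(h)$, and that a $\overline\pi$-centred bounded function has sup-norm at most its oscillation, one obtains
\[
\|\widehat\varphi\|\leq\sum_{j\geq0}\sum_{r=0}^{l-1}\big\|\overline K^{jl+r}\widetilde\varphi\big\|\leq 2\|\varphi\|\,l\sum_{j\geq0}(1-\epsilon)^j=\frac{2\,l\,\|\varphi\|}{\epsilon},
\]
so $\widehat\varphi$ is bounded and solves the Poisson equation $\widehat\varphi-\overline K\widehat\varphi=\widetilde\varphi$.

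It then remains to telescope: from $\widetilde\varphi(Y_q)=\widehat\varphi(Y_q)-\overline K\widehat\varphi(Y_q)$ and $\overline K\widehat\varphi(Y_q)=\mathbb{E}[\widehat\varphi(Y_{q+1})\mid\mathcal{F}_q]$,
\[
\sum_{q=1}^n\widetilde\varphi(Y_q)=\sum_{q=2}^n\Big(\widehat\varphi(Y_q)-\mathbb{E}[\widehat\varphi(Y_q)\mid\mathcal{F}_{q-1}]\Big)+\widehat\varphi(Y_1)-\mathbb{E}[\widehat\varphi(Y_{n+1})\mid\mathcal{F}_n],
\]
which is a martingale with increments bounded by $2\|\widehat\varphi\|$ plus two remainder terms of the same size. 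The Burkholder--Davis--Gundy inequality (or Marcinkiewicz--Zygmund) bounds the $\mathbb{L}_p$-norm of the martingale part by $C_p\sqrt n\,\|\widehat\varphi\|$; combining with the remainder, dividing by $n$, and inserting $\|\widehat\varphi\|\leq 2l\|\varphi\|/\epsilon$ gives the asserted $\mathcal{O}\!\big(\|\varphi\|\,l\,\epsilon^{-1}/\sqrt n\big)$. I expect the only genuinely delicate step to be the bound $\|\widehat\varphi\|=\mathcal{O}(l\epsilon^{-1}\|\varphi\|)$: handling $\overline K$ directly (it is not primitive in one step) would inflate the constant badly in $l$ and/or $\epsilon$, and it is precisely the blockwise use of the $l$-step minorisation with the \emph{same} constant $\epsilon$ that keeps the dependence linear in both $l$ and $\epsilon^{-1}$; the remaining steps are routine once the path-chain viewpoint is in place.
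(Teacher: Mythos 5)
Your proposal is correct and reaches the stated bound, but it organizes the key decomposition differently from the paper, and the difference is worth recording. The paper does not pass to the one-step kernel $\overline K$ of the window chain; instead it solves the Poisson equation for the \emph{$l$-step block} kernel $(K^{\otimes l})$ (your $\overline K^{\,l}$), so its $\widehat\varphi$ satisfies $\|\widehat\varphi\|\leq 2\epsilon^{-1}\|\varphi\|$ with no factor $l$. The price is that the resulting increments $\widehat\varphi(X_{q:q+l-1})-K^{\otimes l}(\widehat\varphi)(X_{q-1})$ condition $l$ steps back and therefore do not form a single martingale; the paper telescopes each one through the intermediate conditional expectations $K^{\otimes(l-1-r)}(\widehat\varphi)(x_{q:q+r})$, obtaining $l$ interleaved martingales $M_n(r)$ for shifted filtrations $\mathcal{F}_q^r=\sigma(X_0,\dots,X_{q+r})$, and pays the factor $l$ by applying Minkowski and Burkholder--Davis--Gundy $l$ times. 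You instead solve the one-step Poisson equation $\widehat\varphi-\overline K\widehat\varphi=\widetilde\varphi$ for the path chain, absorbing the factor $l$ into $\|\widehat\varphi\|\leq 2l\epsilon^{-1}\|\varphi\|$ via the blockwise oscillation bound, and are rewarded with a \emph{single} martingale for the natural filtration $\sigma(X_{1:q+l-1})$ (only one fresh coordinate enters per step, so the increments are genuine martingale differences). Both routes yield $C\|\varphi\|\,l\,\epsilon^{-1}/\sqrt n$; yours is arguably cleaner to state, while the paper's version exposes the objects \eqref{eq:prod1}--\eqref{eq:prod3} and the $l$ sub-martingales that are reused verbatim in the CLT (Propositions \ref{prop:clt_1}--\ref{prop:clt_3}), which is presumably why that form was chosen. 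Two small points to tidy in your write-up: state explicitly that you use BDG for $p\geq 2$ and Lyapunov's inequality for $1\leq p<2$, and note that the uniformity over the initial law (the chain need not start at $\pi$) follows because all increments and remainders are bounded by $2\|\widehat\varphi\|$ pointwise.
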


\begin{proof}
Define the function
\begin{equation}\label{eq:path_pois}
\widehat{\varphi}(x_{1:l}) = \sum_{q\geq 0}\left\{(K^{\otimes l})^q(\varphi)(x_{1:l})-\pi\otimes K^{\otimes(l-1)}(\varphi)\right\}
\end{equation}
where for $q\in\mathbb{N}$
\begin{align*}
(K^{\otimes l})^q(\varphi)(x_{1:l}) & = \int_{\mathsf{X}^{lq}} \varphi(x_{1:l}^{q}) \prod_{j=1}^q K^{\otimes l}(x_{1:l}^{j-1},dx_{1:l}^j) \\
& = \int_{\mathsf{X}^{lq}} \varphi(x_{1:l}^{q}) \prod_{j=1}^q\left\{\prod_{k=1}^l K(x_{k-1}^j,dx_k^j)\right\}
\end{align*}
with $x_{1:l}^0=x_{1:l}$ and $x_0^j=x_l^{j-1}$ (notice that there is $0$ subscript of $x$ in the first line).
In the summand in \eqref{eq:path_pois} if $q=0$ we set $(K^{\otimes l})^q(\varphi)(x_{1:l})=\varphi(x_{1:l})$.
Note that because of (H\ref{ass:mc1}) it easily follows that 
$$
\sup_{x_{1:l}\in\mathsf{X}_l}|\widehat{\varphi}(x_{1:l})| \leq 2\epsilon^{-1} \|\varphi\|.
$$
Then it is easy to check that for any $x_{1:l}\in\mathsf{X}^l$
\begin{align*}
\varphi(x_{1:l}) - \pi\otimes K^{\otimes(l-1)}(\varphi) & = \widehat{\varphi}(x_{1:l}) - K^{\otimes l}(\widehat{\varphi})(x_{1:l})\\
& = \widehat{\varphi}(x_{1:l}) - K^{\otimes l}(\widehat{\varphi})(x_{l}).
\end{align*}
Now, denoting the natural filtration of the Markov chain $(X_k)_{k\geq 0}$ as $(\mathcal{F}_k)_{k\geq 0}$, we remark that
$$
\mathbb{E}[\varphi\left(X_{q:q+l-1}\right)|\mathcal{F}_{q-1}] = K^{\otimes l}(\widehat{\varphi})(x_{q-1}).
$$
Therefore it easily follows that
\begin{align}
\frac{1}{n}\sum_{q=1}^n\left\{ \varphi\left(X_{q:q+l-1}\right)-\pi\otimes K^{\otimes(l-1)}(\varphi)\right\} & = 
\frac{1}{n}\sum_{q=1}^n\left\{\widehat{\varphi}(x_{q:q+l-1}) - K^{\otimes l}(\widehat{\varphi})(x_{q})\right\}\nonumber\\
& = \frac{1}{n}\left\{M_n + R_n\right\}\label{eq:main_eq}
\end{align}
where
\begin{eqnarray*}
M_n & = & \sum_{q=1}^n\left\{\widehat{\varphi}(x_{q:q+l-1}) - K^{\otimes l}(\widehat{\varphi})(x_{q-1})\right\} \\
R_n & = & \left\{K^{\otimes l}(\widehat{\varphi})(x_{0})-K^{\otimes l}(\widehat{\varphi})(x_{n})\right\}.
\end{eqnarray*}
Then combining \eqref{eq:main_eq} with the Minkowksi inequality we have that
\begin{equation}\label{eq:main_eq1}
\mathbb{E}\left[\left|\frac{1}{n}\sum_{q=1}^n \varphi\left(X_{q:q+l-1}\right)-\pi\otimes K^{\otimes(l-1)}(\varphi)\right|^p\right]^{1/p} \leq 
\mathbb{E}\left[\left|\tfrac{1}{n}M_n\right|^p\right]^{1/p} +
\frac{C\|\varphi\|\epsilon}{n}
\end{equation}
Now we adopt the notation for any $(q,r)\in\{1,\dots,n\}\times\{0,\dots,l-1\}$
\begin{eqnarray}
K^{\otimes(l-r-1)}(\widehat{\varphi})(x_{q:q+r}) & := &  \int_{\mathsf{X}^{l-r-1}}\widehat{\varphi}(x_{q:q+l-1})
\prod_{j=q+r+1}^{q+l-1}K(x_{j-1},dx_j)\label{eq:prod1}\\
K^{\otimes(l-r)}(\widehat{\varphi})(x_{q:q+r-1}) & := &  \int_{\mathsf{X}^{l-r}}\widehat{\varphi}(x_{q:q+l-1})
\prod_{j=q+r}^{q+l-1}K(x_{j-1},dx_j)\label{eq:prod2}
\end{eqnarray}
with the convention that $x_{q:q-1}=x_{q-1}$.  We will also write 
\begin{equation}
K^{\otimes(0)}(\widehat{\varphi})(x_{q:q+l-1}) = \widehat{\varphi}(x_{q:q+l-1}).\label{eq:prod3}
\end{equation}
Then we have that
\begin{equation}\label{eq:mart_rep}
M_n = \sum_{q=1}^n \sum_{r=0}^{l-1} \xi_{q,r}
\end{equation}
where
$$
\xi_{q,r} := K^{\otimes(l-1-r)}(\widehat{\varphi})(x_{q:q+r})-K^{\otimes(l-r)}(\widehat{\varphi})(x_{q:q+r-1}).
$$
Now set
$$
M_n(r) := \sum_{q=1}^n \xi_{q,r}
$$
and note we clearly have $M_n=\sum_{r=0}^{l-1}M_n(r)$. For $r\in\{0,\dots,l-1\}$ fixed define the filtration
$\mathcal{F}_q^r:=\sigma(X_0,\dots,X_{q+r})$, $q\geq 1$ with $\mathcal{F}_0^r:=\sigma(X_0)$. Then it is clear that $M_n(r)$ is a $\mathcal{F}_n^r-$martingale. Then applying the Minkowski inequality $l-$times followed by the Burkholder-Gundy-Davis inequality $l-$times, we obtain the upper-bound:
$$
\mathbb{E}\left[\left|\tfrac{1}{n}M_n\right|^p\right]^{1/p} \leq \frac{C\|\varphi\|l\epsilon^{-1}}{\sqrt{n}}
$$
and then combining the above upper-bound with \eqref{eq:main_eq1} allows us to conclude.
\end{proof}

%\begin{rem}
%To prove a CLT we have the alternative representation of \eqref{eq:mart_rep}
%$$
%M_n = \sum_{k=l}^n\sum_{j=0}^{l-1}\xi_{k-j,j} 
%+\sum_{k=1}^{l-1}\xi_{k,0} +
%\sum_{j=1}^{l-1}\sum_{k=1}^j \xi_{k,j}.
%$$
%Then when normalizing by $1/\sqrt{n}$ one can prove a CLT for
%$$
%\frac{1}{\sqrt{n}}\left\{\sum_{k=l}^n\sum_{j=0}^{l-1}\xi_{k-j,j}
%+\sum_{k=1}^{l-1}\xi_{k,0}\right\}
%$$
%by using the Martingale array CLT (see \cite[Corollary 3.1]{hall}) as the term 
%$\sum_{k=l}^n\sum_{j=0}^{l-1}\xi_{k-j,j} 
%+\sum_{k=1}^{l-1}\xi_{k,0}$
%is a $\mathcal{F}_n^0-$Martingale.  The additional term 
%$$
%\frac{1}{\sqrt{n}}\sum_{j=1}^{l-1}\sum_{k=1}^j \xi_{k,j}
%$$
%can easily be shown to converge to zero in probability. To consider a CLT for the ratio
%$$
%\frac{
%\frac{1}{\sqrt{n}}\sum_{q=1}^n\left\{ \varphi\left(X_{q:q+l-1}\right)-\pi\otimes K^{\otimes(l-1)}(\varphi)\right\}
%}
%{
%\frac{1}{\sqrt{n}}\sum_{q=1}^n\left\{ \psi\left(X_{q:q+l-1}\right)-\pi\otimes K^{\otimes(l-1)}(\psi)\right\}
%}
%$$
%one can use the delta method.
%\end{rem}

\subsubsection{Central Limit Theorems}\label{app:clt}

Recall the definitions
in \eqref{eq:path_pois} and \eqref{eq:prod1}-\eqref{eq:prod3} and set for $(l,k,\varphi)\in\{2,3,\dots\}\times\{l,l+1,\dots\}\times\mathcal{B}_b(\mathsf{X}^l)$
\begin{align}
\sigma^2_{\varphi} & :=  \int_{\mathsf{X}^{l-1}} \tilde{\varphi}(x_{k-l+1:k-1}) \pi(dx_{k-l+1})\prod_{s=k-l+2}^{k-1} K(x_{s-1},dx_s) \label{eq:clt_av1}\\
\tilde{\varphi}(x_{k-l+1:k-1}) & =  \int_{\mathsf{X}}\left(\sum_{j=0}^{l-1} K^{\otimes(l-1-j)}(\widehat{\varphi})(x_{k-j:k})\right)^2 K(x_{k-1},dx_k) - %\nonumber\\
 \left(\sum_{j=0}^{l-1} K^{\otimes(l-j)}(\widehat{\varphi})(x_{k-j:k-1})\right)^2.\label{eq:clt_av2}
\end{align}
Below we use $\xrightarrow[]{d}$ to denote convergence in distribution as $n$ increases. 

\begin{prop}\label{prop:clt_1}
Assume (H\ref{ass:mc1}). For any $(l,\varphi)\in\{2,3,\dots\}\times\mathcal{B}_b(\mathsf{X}^l)$ we have
$$
\frac{1}{\sqrt{n}}\sum_{q=1}^n (\varphi\left(X_{q:q+l-1}\right)-\pi\otimes K^{\otimes(l-1)}(\varphi))\xrightarrow[]{d}  \mathcal{N}(0,\sigma_{\varphi}^2)
$$
where $\sigma^2_{\varphi}$ is defined in \eqref{eq:clt_av1}-\eqref{eq:clt_av2}.
%\begin{equation}\label{eq:sigma_phi}
%\sigma_{\varphi}^2 = \left(\pi\otimes K^{\otimes l}\right)\left(\left(\sum_{j=0}^{l-1} K^{\otimes(l-1-j)}(\hat{\varphi})\circ\zeta_{j+1}\right)^2\right) - \left(\pi\otimes K^{\otimes l}\right)\left(\left(\sum_{j=0}^{l-1} K^{\otimes(l-j)}(\hat{\varphi})\circ\zeta_j\right)^2\right)
%\end{equation}
%and $\zeta_j:\mathbb{R}^l\rightarrow\mathbb{R}^{j}$ defined by $(x_0,x_1,\dots,x_l)\mapsto(x_{l-j+1},\dots,x_l)$ for $1\leq j\leq l$ and $\zeta_0=(x_0,\dots,x_l)\mapsto x_0$.
\end{prop}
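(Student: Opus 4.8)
The plan is to build on the martingale-plus-remainder decomposition already established in the proof of Proposition \ref{prop:mc} and upgrade it to a central limit theorem by invoking a martingale CLT. First I would recall the key algebraic identity derived there: with $\widehat{\varphi}$ the path-wise solution of the Poisson equation from \eqref{eq:path_pois}, one has
$$
\sum_{q=1}^n\left\{\varphi(X_{q:q+l-1})-\pi\otimes K^{\otimes(l-1)}(\varphi)\right\} = M_n + R_n,
$$
where $R_n = K^{\otimes l}(\widehat{\varphi})(X_0)-K^{\otimes l}(\widehat{\varphi})(X_n)$ is uniformly bounded (by $(\textrm{H}1)$ and the bound $\|\widehat{\varphi}\|\leq 2\epsilon^{-1}\|\varphi\|$), so $R_n/\sqrt{n}\to 0$ almost surely and hence in probability. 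By Slutsky's theorem it therefore suffices to prove $M_n/\sqrt{n}\xrightarrow{d}\mathcal{N}(0,\sigma_\varphi^2)$.

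Next I would exhibit $M_n$ as a genuine martingale sum rather than a sum of $l$ separate martingales. The cleanest route is to re-bracket the increments: using the telescoping across the layers $r=0,\dots,l-1$ in \eqref{eq:mart_rep}, group the terms so that $M_n = \sum_{m} D_m$ where $D_m$ is $\mathcal{F}_m$-measurable, $\mathbb{E}[D_m\mid\mathcal{F}_{m-1}]=0$, and $(\mathcal{F}_m)$ is the natural filtration of the chain; concretely $D_m$ collects all increments $\xi_{q,r}$ with $q+r = m$, which yields $D_m = \sum_{j=0}^{l-1} K^{\otimes(l-1-j)}(\widehat{\varphi})(X_{m-j:m}) - \sum_{j=0}^{l-1} K^{\otimes(l-j)}(\widehat{\varphi})(X_{m-j:m-1})$, a bounded stationary functional of the chain whose conditional mean given $\mathcal{F}_{m-1}$ vanishes. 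Its conditional variance $\mathbb{E}[D_m^2\mid\mathcal{F}_{m-1}]$ is exactly the functional $\tilde{\varphi}(X_{m-l:m-1})$ appearing in \eqref{eq:clt_av2} (after matching indices). Then I would apply the martingale CLT for stationary, ergodic, square-integrable martingale difference sequences (e.g.\ the Billingsley–Ibragimov theorem, or Hall–Heyde): under $(\textrm{H}1)$ the chain started from $\pi$ is stationary and ergodic (indeed uniformly ergodic, hence mixing), the increments are bounded (so Lindeberg is automatic), and by the ergodic theorem $\frac1n\sum_{m=1}^n \mathbb{E}[D_m^2\mid\mathcal{F}_{m-1}] = \frac1n\sum_{m=1}^n\tilde{\varphi}(X_{m-l:m-1}) \to \pi\otimes K^{\otimes(l-2)}(\tilde\varphi) = \sigma_\varphi^2$ almost surely. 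This gives the conclusion for the stationary chain; a short coupling/contraction argument under $(\textrm{H}1)$ removes the dependence on the initial law $\mu$, since the first few coordinates are forgotten geometrically and affect only $O(1)$ many increments, contributing $o(\sqrt n)$.

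The main obstacle I anticipate is the bookkeeping in the re-bracketing step — verifying that the regrouped increments $D_m$ are genuinely a martingale difference sequence with respect to the single filtration $(\mathcal{F}_m)$ and that the telescoping identity $\sum_q\sum_r \xi_{q,r} = \sum_m D_m$ holds up to boundary terms that are $O(l)$ in number and uniformly bounded, hence negligible after dividing by $\sqrt n$. One must also confirm carefully that $\mathbb{E}[D_m^2\mid\mathcal{F}_{m-1}]$ coincides with the expression \eqref{eq:clt_av2}; this is a direct computation expanding the square and using the tower property with the kernels $K^{\otimes(\cdot)}(\widehat\varphi)$, but the index shifts require care. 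Everything else — boundedness of $\widehat\varphi$ and of $D_m$, convergence of the quadratic variation via the ergodic theorem, and the passage from the stationary chain to an arbitrary start — is routine given $(\textrm{H}1)$ and the estimates already in hand from Proposition \ref{prop:mc}.
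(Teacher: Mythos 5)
Your proposal is correct and follows essentially the same route as the paper: the Poisson-equation/martingale-plus-remainder decomposition from Proposition \ref{prop:mc}, re-bracketing the increments $\xi_{q,r}$ along $q+r=m$ into a single martingale difference sequence (with $\mathcal{O}(l^2)$ negligible boundary terms), and a martingale CLT with the conditional variance identified as $\tilde{\varphi}$ in \eqref{eq:clt_av2}. The only cosmetic difference is that the paper invokes the Hall--Heyde martingale array CLT with a direct conditional Lindeberg check and uses Proposition \ref{prop:mc} itself for convergence in probability of the bracket, which sidesteps the stationarity/coupling step you sketch.
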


\begin{proof}
    We follow the notation of Proposition \ref{prop:mc}. We have $|\frac{1}{\sqrt{n}}R_n|\leq C\|\varphi\|\epsilon/\sqrt{n}\rightarrow0$. For $\frac{1}{\sqrt{n}}M_n$ we have the representation
    $$
    M_n = \frac{1}{\sqrt{n}}\sum_{k=l}^n\sum_{j=0}^{l-1}\xi_{k-j,j} + \frac{1}{\sqrt{n}}\sum_{k=1}^{l-1}\sum_{j=0}^{l-k}\xi_{k,j} + \frac{1}{\sqrt{n}}\sum_{k=n-l+1}^{n}\sum_{j=n-k+1}^{l-1}\xi_{k,j}.
    $$
    The number of terms in the second and third terms on the R.H.S.~is $\mathcal{O}(l^2)$ and all the terms are bounded thus they approach zero as $n\rightarrow\infty$. For the first term, we use the Martingale array CLT (\cite[Corollary 3.1]{hall}). Notice that $\sum_{k=l}^n\sum_{j=0}^{l-1}\xi_{k-j,j}$ is a $\mathcal{F}^0_n$-martingale.
    Let $\delta>0$, we have
    \begin{eqnarray*}
         \sum_{k=l}^n \mathbb{E}\left[\left(\frac{1}{\sqrt{n}}\sum_{j=0}^{l-1}\xi_{k-j,j}\right)^2\mathbbm{1}_{\{|x|>\delta\}}\left(\frac{1}{\sqrt{n}}\sum_{j=0}^{l-1}\xi_{k-j,j}\right)\bigg|\mathcal{F}_{k-1}^0\right]
            & \leq& \frac{1}{n^{3/2}\delta}\sum_{k=l}^n\mathbb{E}\left[\left(\sum_{j=0}^{l-1}\xi_{k-j,j}\right)^3\bigg|\mathcal{F}_{k-1}^0\right]\\
            &\leq& \frac{8l^3(n-l+1)\|\varphi\|\epsilon^{-1}}{n^{3/2}\delta}\longrightarrow0.
    \end{eqnarray*}
    For the asymptotic variance, noting \eqref{eq:clt_av2},   we use the definition of $\xi_{k-j,j}$ and write
    \begin{equation*}
\sum_{k=l}^n \mathbb{E}\left[\left(\frac{1}{\sqrt{n}}\sum_{j=0}^{l-1}\xi_{k-j,j}\right)^2\bigg|\mathcal{F}_{k-1}^0\right]
            =\frac{1}{n}\sum_{k=l}^n \tilde{\varphi}(X_{k-l+1:k-1})
\rightarrow_{\mathbb{P}}\sigma_{\varphi}^2,
    \end{equation*}
    where $\rightarrow_{\mathbb{P}}$ denotes convergence in probability as $n$ grows and this follows from Proposition \ref{prop:mc}.
\end{proof}

\begin{prop}\label{prop:clt_2}
    Assume (H\ref{ass:mc1}). For any $(l,\varphi,\psi)\in\{2,3\dots\}\times\mathcal{B}_b(\mathsf{X}^l)^2$ we have
\begin{equation*}
\sqrt{n}\left(\frac{\frac{1}{n}\sum_{q=1}^n \varphi(X_{q:q+l-1})}{\frac{1}{n}\sum_{q=1}^n \psi(X_{q:q+l-1})}-\frac{\pi\otimes K^{\otimes(l-1)}(\varphi)}{\pi\otimes K^{\otimes(l-1)}(\psi)}\right) 
\xrightarrow[]{d} \mathcal{N}(0, \sigma_{\tau}^2),
\end{equation*}
with $\tau = \frac{1}{\pi\otimes K^{\otimes(l-1)}(\psi)}\varphi - \frac{\pi\otimes K^{\otimes(l-1)}(\varphi)}{(\pi\otimes K^{\otimes(l-1)}(\psi))^2}\psi$.
\end{prop}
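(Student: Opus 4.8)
The plan is to deduce the statement from a bivariate version of Proposition~\ref{prop:clt_1} together with the classical delta method. Throughout I would write $a := \pi\otimes K^{\otimes(l-1)}(\varphi)$ and $b := \pi\otimes K^{\otimes(l-1)}(\psi)$, assuming (as is automatic in the intended application, where $\psi$ is strictly positive and bounded) that $b\neq 0$, and I would set $\bar\varphi_n := \frac1n\sum_{q=1}^n\varphi(X_{q:q+l-1})$ and $\bar\psi_n := \frac1n\sum_{q=1}^n\psi(X_{q:q+l-1})$.

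First I would establish the joint CLT. For arbitrary $(\alpha,\beta)\in\mathbb{R}^2$ the function $\alpha\varphi+\beta\psi$ lies in $\mathcal{B}_b(\mathsf{X}^l)$, and since the path-wise Poisson solution $\phi\mapsto\widehat\phi$ of \eqref{eq:path_pois}, the operators $K^{\otimes(\cdot)}$ and the functional $\pi\otimes K^{\otimes(l-1)}(\cdot)$ are all linear in $\phi$, Proposition~\ref{prop:clt_1} applied to $\alpha\varphi+\beta\psi$ gives
$$
\frac{1}{\sqrt n}\sum_{q=1}^n\Big(\alpha\big[\varphi(X_{q:q+l-1})-a\big]+\beta\big[\psi(X_{q:q+l-1})-b\big]\Big)\xrightarrow[]{d}\mathcal{N}\big(0,\sigma^2_{\alpha\varphi+\beta\psi}\big).
$$
By the Cram\'er--Wold device this upgrades to
$$
\frac{1}{\sqrt n}\sum_{q=1}^n\big(\varphi(X_{q:q+l-1})-a,\ \psi(X_{q:q+l-1})-b\big)\xrightarrow[]{d}(U,V)
$$
for a centered Gaussian vector $(U,V)$. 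I would then observe that, because $\tilde\varphi$ in \eqref{eq:clt_av2} is quadratic in $\widehat\varphi$ (the maps $\phi\mapsto\sum_{j}K^{\otimes(l-1-j)}(\widehat\phi)$ and $\phi\mapsto\sum_{j}K^{\otimes(l-j)}(\widehat\phi)$ being linear), the map $\phi\mapsto\sigma^2_\phi$ of \eqref{eq:clt_av1} is a nonnegative quadratic form; polarising it identifies the covariance of $(U,V)$ with the associated symmetric bilinear form, so that $\mathrm{Var}(\alpha U+\beta V)=\sigma^2_{\alpha\varphi+\beta\psi}$ for every $\alpha,\beta$.

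Next I would apply the delta method. By Proposition~\ref{prop:mc} with $p=1$ one has $\bar\varphi_n\rightarrow_{\mathbb{P}}a$ and $\bar\psi_n\rightarrow_{\mathbb{P}}b$, hence $\bar\psi_n$ is eventually nonzero and $1/\bar\psi_n\rightarrow_{\mathbb{P}}1/b$. From the identity
$$
\frac{\bar\varphi_n}{\bar\psi_n}-\frac ab=\frac{1}{\bar\psi_n}(\bar\varphi_n-a)-\frac{a}{b\,\bar\psi_n}(\bar\psi_n-b)
$$
I get
$$
\sqrt n\left(\frac{\bar\varphi_n}{\bar\psi_n}-\frac ab\right)=\frac{1}{\bar\psi_n}\,\sqrt n\,(\bar\varphi_n-a)-\frac{a}{b\,\bar\psi_n}\,\sqrt n\,(\bar\psi_n-b),
$$
and Slutsky's theorem together with the joint CLT of the previous step shows the right-hand side converges in distribution to $\frac1bU-\frac{a}{b^2}V$, a centered Gaussian with variance
$$
\mathrm{Var}\!\left(\tfrac1bU-\tfrac{a}{b^2}V\right)=\sigma^2_{\frac1b\varphi-\frac{a}{b^2}\psi}=\sigma^2_\tau
$$
by the bilinearity just established, with $\tau=\frac{1}{\pi\otimes K^{\otimes(l-1)}(\psi)}\varphi-\frac{\pi\otimes K^{\otimes(l-1)}(\varphi)}{(\pi\otimes K^{\otimes(l-1)}(\psi))^2}\psi$ exactly as in the statement.

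The hard part will not be the delta method (which is routine once the denominator is known to be non-degenerate) but the bookkeeping in the joint CLT: Proposition~\ref{prop:clt_1} only delivers one-dimensional limits, so I must check that the Cram\'er--Wold family of limits is consistent, i.e.\ that $\phi\mapsto\sigma^2_\phi$ really is a nonnegative quadratic form in $\phi$ whose polarisation yields $\mathrm{Cov}(U,V)$. This is where the explicit formulae \eqref{eq:clt_av1}--\eqref{eq:clt_av2} and the linearity of \eqref{eq:path_pois} do the work; the negligible boundary sums and the remainder $R_n$ have already been handled inside the proof of Proposition~\ref{prop:clt_1}, so nothing new is needed there.
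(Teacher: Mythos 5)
Your proposal is correct and follows essentially the same route as the paper: a delta-method linearization of the ratio combined with the linearity of the path-wise Poisson solution $\phi\mapsto\widehat\phi$ from \eqref{eq:path_pois}. The only difference is packaging: the paper applies the scalar CLT argument of Proposition \ref{prop:clt_1} directly to the single function $\tau$ (whose martingale differences $\xi_{q,r}$ are, by linearity, exactly the corresponding linear combination of those for $\varphi$ and $\psi$), whereas you first assemble a bivariate CLT via Cram\'er--Wold and then identify $\mathrm{Var}\bigl(\tfrac{1}{b}U-\tfrac{a}{b^2}V\bigr)=\sigma^2_{\tau}$ by polarising the quadratic form $\phi\mapsto\sigma^2_{\phi}$; both yield the same limit and neither requires anything beyond what you have checked.
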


\begin{proof}
    Using the delta method, the limit is the same as
    \begin{equation*}
        \begin{split}
                &\frac{1}{\pi\otimes K^{\otimes(l-1)}(\psi)}\frac{1}{\sqrt{n}}\left(\sum_{q=1}^n \varphi(X_{q:q+l-1})-\pi\otimes K^{\otimes(l-1)}(\varphi)\right) \\
                -& \frac{\pi\otimes K^{\otimes(l-1)}(\varphi)}{(\pi\otimes K^{\otimes(l-1)}(\psi))^2}\frac{1}{\sqrt{n}}\left(\sum_{q=1}^n \psi(X_{q:q+l-1})-\pi\otimes K^{\otimes(l-1)}(\psi)\right).
        \end{split}    
    \end{equation*}
    Similar to Proposition \ref{prop:clt_1} we use the decomposition
    \begin{equation}
        \begin{split}
            \xi_{q,r} & = \frac{1}{\pi\otimes K^{\otimes(l-1)}(\psi)}\left(K^{\otimes(l-1-r)}(\widehat{\varphi})(x_{q:q+r})-K^{\otimes(l-r)}(\widehat{\varphi})(x_{q:q+r-1})\right)\\
            -& \frac{\pi\otimes K^{\otimes(l-1)}(\varphi)}{(\pi\otimes K^{\otimes(l-1)}(\psi))^2}\left(K^{\otimes(l-1-r)}(\widehat{\psi})(x_{q:q+r})-K^{\otimes(l-r)}(\widehat{\psi})(x_{q:q+r-1})\right)\\
            =& K^{\otimes(l-1-r)}(\widehat{\tau})(x_{q:q+r})-K^{\otimes(l-r)}(\widehat{\tau})(x_{q:q+r-1})
        \end{split}
    \end{equation}
and doing similar calculations we get the desired result.
\end{proof}

\begin{prop}\label{prop:clt_3}
Assume (H\ref{ass:mc1}). Let $(\tilde{X}_q)_{q\geq 0}$ be an independent copy of $(X_q)_{q\geq 0}$. 
For any $(l,\varphi,\psi)\in\{2,3\dots\}\times\mathcal{B}_b(\mathsf{X}^l)^2$ we have
$$
\sqrt{n}\left(\frac{\frac{1}{n}\sum_{q=1}^n \varphi(X_{q:q+l-1})}{\frac{1}{n}\sum_{q=1}^n \psi(\tilde{X}_{q:q+l-1})}-\frac{\pi\otimes K^{\otimes(l-1)}(\varphi)}{\pi\otimes K^{\otimes(l-1)}(\psi)}\right)\xrightarrow[]{d}  \mathcal{N}(0,\sigma^2)
$$
with
$$
\sigma^2 = \frac{1}{(\pi\otimes K^{\otimes(l-1)}(\psi))^2}\sigma_{\varphi}^2 + \frac{(\pi\otimes K^{\otimes(l-1)}(\varphi))^2}{(\pi\otimes K^{\otimes(l-1)}(\psi))^4}\sigma_{\psi}^2,
$$
where $\sigma_{\varphi}^2$ and $\sigma_{\psi}^2$ are defined via \eqref{eq:clt_av1}.
\end{prop}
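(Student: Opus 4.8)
The plan is to derive the statement from the two marginal central limit theorems of Proposition~\ref{prop:clt_1}, the independence of the chains $(X_q)_{q\geq0}$ and $(\tilde X_q)_{q\geq0}$, and a delta-method argument. Throughout set $a:=\pi\otimes K^{\otimes(l-1)}(\varphi)$ and $b:=\pi\otimes K^{\otimes(l-1)}(\psi)$; since the statement involves the ratio $a/b$ we assume $b\neq0$. Write
$$
A_n := \frac{1}{\sqrt{n}}\sum_{q=1}^n\big(\varphi(X_{q:q+l-1})-a\big),\qquad
B_n := \frac{1}{\sqrt{n}}\sum_{q=1}^n\big(\psi(\tilde X_{q:q+l-1})-b\big),
$$
and $\bar\varphi_n:=\tfrac1n\sum_{q=1}^n\varphi(X_{q:q+l-1})$, $\bar\psi_n:=\tfrac1n\sum_{q=1}^n\psi(\tilde X_{q:q+l-1})$.

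First I would apply Proposition~\ref{prop:clt_1} twice: to $(X_q)_{q\geq0}$ with test function $\varphi$ to obtain $A_n\xrightarrow[]{d}\mathcal{N}(0,\sigma_{\varphi}^2)$, and to the independent copy $(\tilde X_q)_{q\geq0}$ with test function $\psi$ to obtain $B_n\xrightarrow[]{d}\mathcal{N}(0,\sigma_{\psi}^2)$, where $\sigma_{\varphi}^2,\sigma_{\psi}^2$ are as in \eqref{eq:clt_av1}-\eqref{eq:clt_av2}. Since $A_n$ is measurable with respect to $\sigma((X_q)_{q\geq0})$ and $B_n$ with respect to $\sigma((\tilde X_q)_{q\geq0})$, and these $\sigma$-algebras are independent, the joint characteristic function of $(A_n,B_n)$ factorises into the product of its two marginals for every $n$; letting $n\to\infty$ and using that both marginal limits are Gaussian, this product converges to the characteristic function of an independent Gaussian pair, so that
$$
(A_n,B_n)\xrightarrow[]{d}\mathcal{N}\big(0,\mathrm{diag}(\sigma_{\varphi}^2,\sigma_{\psi}^2)\big).
$$
The key point here is that no joint martingale central limit theorem is needed: convergence in distribution of two independent sequences automatically yields joint convergence to the product law, and the vanishing cross-covariance is exactly where the independence of $(X_q)$ and $(\tilde X_q)$ enters.

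Next I would transfer this to the ratio. By Proposition~\ref{prop:mc} with $p=1$ we have $\bar\varphi_n\to a$ and $\bar\psi_n\to b$ in probability, so $\bar\psi_n$ is bounded away from $0$ with probability tending to one; on that event one has the elementary expansion
$$
\sqrt{n}\left(\frac{\bar\varphi_n}{\bar\psi_n}-\frac{a}{b}\right) = \frac{1}{b}A_n - \frac{a}{b^2}B_n + \sqrt{n}\left(\frac{(\bar\varphi_n-a)(b-\bar\psi_n)}{b\,\bar\psi_n}+\frac{a(\bar\psi_n-b)^2}{b^2\,\bar\psi_n}\right),
$$
whose last term is $o_{\mathbb{P}}(1)$ because $\bar\varphi_n-a$ and $\bar\psi_n-b$ are $O_{\mathbb{P}}(n^{-1/2})$ while $\bar\psi_n^{-1}=O_{\mathbb{P}}(1)$; equivalently one simply applies the multivariate delta method to $g(u,v)=u/v$ at $(a,b)$, with gradient $(1/b,-a/b^2)$. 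Combining the linearisation with the joint CLT above and Slutsky's lemma, $\sqrt{n}(\bar\varphi_n/\bar\psi_n-a/b)$ converges in distribution to $\tfrac1b Z_1-\tfrac{a}{b^2}Z_2$ with $(Z_1,Z_2)\sim\mathcal{N}(0,\mathrm{diag}(\sigma_{\varphi}^2,\sigma_{\psi}^2))$, a centred Gaussian of variance $b^{-2}\sigma_{\varphi}^2+a^2b^{-4}\sigma_{\psi}^2$, which is precisely the claimed $\sigma^2$.

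The main obstacle is, honestly, a light one: everything rests on Proposition~\ref{prop:clt_1}, and beyond that the only points needing care are (i) ensuring $b\neq0$ so that the ratio and its expansion are legitimate, and (ii) the routine verification that the remainder in the ratio expansion is $o_{\mathbb{P}}(n^{-1/2})$, which is immediate from the law-of-large-numbers bound of Proposition~\ref{prop:mc}. In particular I would expect no genuinely new argument to be required here beyond what is already established earlier in the appendix.
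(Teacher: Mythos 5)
Your proposal is correct and follows essentially the same route as the paper's own proof: linearize the ratio via the delta method at $(\pi\otimes K^{\otimes(l-1)}(\varphi),\pi\otimes K^{\otimes(l-1)}(\psi))$, apply Proposition \ref{prop:clt_1} to each chain separately, and use the independence of $(X_q)_{q\geq 0}$ and $(\tilde{X}_q)_{q\geq 0}$ to add the two variance contributions. The paper's argument is simply a terser version of yours (it does not spell out the joint convergence or the remainder control), and your explicit caveat that $\pi\otimes K^{\otimes(l-1)}(\psi)\neq 0$ is a reasonable point that the paper leaves implicit.
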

\begin{proof}
    By the delta method the distributional limit of
    $$
    \sqrt{n}\left(\frac{\frac{1}{n}\sum_{q=1}^n \varphi(X_{q:q+l-1})}{\frac{1}{n}\sum_{q=1}^n \psi(\tilde{X}_{q:q+l-1})}-\frac{\pi\otimes K^{\otimes(l-1)}(\varphi)}{\pi\otimes K^{\otimes(l-1)}(\psi)}\right)
    $$
    is the same as the distributional limit of 
    \begin{equation*}
        \begin{split}
                &\frac{1}{\pi\otimes K^{\otimes(l-1)}(\psi)}\frac{1}{\sqrt{n}}\left(\sum_{q=1}^n \varphi(X_{q:q+l-1})-\pi\otimes K^{\otimes(l-1)}(\varphi)\right) \\
                -& \frac{\pi\otimes K^{\otimes(l-1)}(\varphi)}{(\pi\otimes K^{\otimes(l-1)}(\psi))^2}\frac{1}{\sqrt{n}}\left(\sum_{q=1}^n \psi(\tilde{X}_{q:q+l-1})-\pi\otimes K^{\otimes(l-1)}(\psi)\right)
        \end{split}    
    \end{equation*}
    The result follows because of the independence of $(X_q)_{q\geq 0}$ and $(\tilde{X}_q)_{q\geq 0}$ and the CLT for each term. 
\end{proof}

\section{$\mathbb{L}_1-$Proofs}\label{app:l1_proof}

\begin{lem}\label{lem:decomp}
We have the following decomposition for any $(n,l,N,\varphi)\in\mathbb{N}_0\times\mathbb{N}^2\times\mathcal{B}_b(\mathsf{E})$ whenever the formulae exist:
$$
\mu^{l,N}_n(\varphi) = \frac{1}{n}\sum_{p=0}^{n-1}\eta^N_{p}Q^{l}(\varphi)+\frac{1}{\sqrt{N}}~
V^{l,N}_n(\varphi)
$$
where
\begin{eqnarray*}
V^{l,N}_n(\varphi)& := & \frac{1}{n}\sum_{p=0}^{n-1}\sum_{k=1}^l\frac{\gamma^N_{p+k}(1)}{\gamma^N_{p}(1)}~V^N_{p+k}(Q^{l-k}(\varphi))\\
V^N_n(\varphi)& := & \sqrt{N}\left(\eta^N_{n}(\varphi)-\Phi\left(\eta^N_{n-1}\right)(\varphi)\right).
\end{eqnarray*}
\end{lem}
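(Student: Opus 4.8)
The plan is to reduce the statement to a path-wise identity for a single summand and then average. Concretely, since
$\mu^{l,N}_n(\varphi)=\tfrac1n\sum_{p=0}^{n-1}\gamma^N_{p+l}(\varphi)/\gamma^N_p(1)$, it suffices to prove that for each $0\le p<n$,
\begin{equation*}
\frac{\gamma^N_{p+l}(\varphi)}{\gamma^N_p(1)} = \eta^N_p Q^l(\varphi) + \frac{1}{\sqrt{N}}\sum_{k=1}^l \frac{\gamma^N_{p+k}(1)}{\gamma^N_p(1)}\,V^N_{p+k}\big(Q^{l-k}(\varphi)\big),
\end{equation*}
after which summing over $p$ and dividing by $n$ gives the decomposition with $V^{l,N}_n(\varphi)$ exactly as stated; note every $\gamma^N_p(1)=\prod_{0\le q<p}\eta^N_q(G)$ is strictly positive because $G>0$, which is what makes all the ratios meaningful.

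First I would record the elementary one-step identity. Using $\gamma^N_m(\varphi)=\gamma^N_m(1)\,\eta^N_m(\varphi)$, the factorisation $\gamma^N_m(1)=\gamma^N_{m-1}(1)\,\eta^N_{m-1}(G)$, the defining relation $\eta^N_m(\varphi)=\Phi(\eta^N_{m-1})(\varphi)+\tfrac{1}{\sqrt N}V^N_m(\varphi)$, and $\Phi(\eta^N_{m-1})(\varphi)=\eta^N_{m-1}Q(\varphi)/\eta^N_{m-1}(G)$, one obtains
\begin{equation*}
\gamma^N_m(\varphi) = \gamma^N_{m-1}Q(\varphi) + \frac{\gamma^N_m(1)}{\sqrt N}\,V^N_m(\varphi),
\end{equation*}
valid for every $m\ge1$ and every $\varphi\in\mathcal{B}_b(\mathsf{E})$ (recall $Q$ maps $\mathcal{B}_b(\mathsf{E})$ into itself since $G$ is bounded, so all the objects below are well defined). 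I would then prove the path-wise identity for $\gamma^N_{p+l}(\varphi)$ by induction on $l\ge1$: the base case $l=1$ is the one-step identity with $m=p+1$; for the inductive step, apply the one-step identity with $m=p+l+1$ to split off the $k=l+1$ term, then apply the inductive hypothesis at lag $l$ to $\gamma^N_{p+l}\big(Q(\varphi)\big)$, using $Q^l(Q(\varphi))=Q^{l+1}(\varphi)$ and $Q^{l-k}(Q(\varphi))=Q^{l+1-k}(\varphi)$ to re-index the remaining increment terms. Finally, dividing through by $\gamma^N_p(1)$ and using $\gamma^N_p Q^l(\varphi)/\gamma^N_p(1)=\eta^N_p Q^l(\varphi)$, then averaging over $p=0,\dots,n-1$, yields the claim.

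I do not expect a genuine obstacle here; the lemma is essentially an exact algebraic telescoping. The only points that need care are the correct bookkeeping of the index shift each time $Q$ is applied, so that in the $k$-th term the weight $\gamma^N_{p+k}(1)$ is paired with precisely the function $Q^{l-k}(\varphi)$, and the (trivial) verification that $V^N_m(Q^{j}(\varphi))$ is a legitimate object for all the $j\le l$ that occur. These are routine under (A\ref{ass:smc}), which in particular guarantees boundedness of $G$.
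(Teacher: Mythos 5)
Your proof is correct and is essentially the paper's argument: the paper establishes the same path-wise identity for each fixed $p$ via an explicit telescoping sum $\gamma^N_{p+l}-\gamma^N_{p}Q^{l}=\sum_{k=1}^l(\gamma^N_{p+k}Q^{l-k}-\gamma^N_{p+k-1}Q^{l-k+1})$, where each increment is identified with $\tfrac{\gamma^N_{p+k}(1)}{\sqrt N}V^N_{p+k}(Q^{l-k}(\varphi))$ — which is precisely your one-step identity applied to $Q^{l-k}(\varphi)$. Your induction on $l$ is just the unrolled form of that telescoping, so the two proofs coincide in substance.
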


\begin{proof}
We start by noting that
$$
\gamma^N_{p+l}-\gamma^N_{p}Q^{l}=\sum_{k=1}^l\left(\gamma^N_{p+k}Q^{l-k}-\gamma^N_{p+(k-1)}Q^{l-(k-1)}\right).
$$
Conversely,  we have
$$
\eta^N_{p+(k-1)}Q^{l-(k-1)}=\eta^N_{p+(k-1)}QQ^{l-k}=\eta^N_{p+(k-1)}(G)~\frac{\eta^N_{p+(k-1)}QQ^{l-k}}{\eta^N_{p+(k-1)}Q(1)}
=
\eta^N_{p+(k-1)}(G)~\Phi\left(\eta^N_{p+(k-1)}\right)Q^{l-k}.
$$
This yields the decomposition
$$
\gamma^N_{p+l}-\gamma^N_{p}Q^{l}=\sum_{k=1}^l\gamma^N_{p+k}(1)\left(\eta^N_{p+k}-\Phi\left(\eta^N_{p+(k-1)}\right)\right)Q^{l-k}
$$
from which we check that
$$
\frac{\gamma^N_{p+l}(\varphi)}{\gamma^N_{p}(1)}-\eta^N_{p}Q^{l}(\varphi)=\frac{1}{\sqrt{N}}\sum_{k=1}^l\frac{\gamma^N_{p+k}(1)}{\gamma^N_{p}(1)}~
V^N_{p+k}(Q^{l-k}(\varphi))
$$
with the centered local perturbation random fields
$$
V^N_n = \sqrt{N}\left(\eta^N_{n}-\Phi\left(\eta^N_{n-1}\right)\right).
$$
This implies that
$$
\mu^{l,N}_n(\varphi) =\frac{1}{n}\sum_{p=0}^{n-1}\eta^N_{p}Q^{l}(\varphi)+\frac{1}{\sqrt{N}}~
V^{l,N}_n(\varphi)
$$
with
$$
V^{l,N}_n(\varphi)  := \frac{1}{n}\sum_{p=0}^{n-1}\sum_{k=1}^l\frac{\gamma^N_{p+k}(1)}{\gamma^N_{p}(1)}~V^N_{p+k}(Q^{l-k}(\varphi))
$$
which completes the proof.
\end{proof}

\begin{lem}\label{tech:lem}
Assume (A\ref{ass:smc}). Then there exists a $C\in(0,\infty)$ such that for any $(n,N,l,\varphi)\in
\mathbb{N}_0\times\mathbb{N}^2\times\mathcal{B}_b(\mathsf{E})$:
$$
\mathbb{E}\left[\left|\frac{\mu^{l,N}_n(\varphi)-\mu^{l}_n(\varphi)}{\mu^{l}_n(1)}\right|\right]
\leq \frac{C\|\varphi\|l}{\sqrt{N}}.
$$
\end{lem}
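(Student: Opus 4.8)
The plan is to start from the decomposition of Lemma~\ref{lem:decomp}, which gives $\mu^{l,N}_n(\varphi)-\mu^{l}_n(\varphi)=A_n+\tfrac1{\sqrt N}V^{l,N}_n(\varphi)$ with $A_n:=\tfrac1n\sum_{p=0}^{n-1}(\eta^N_p-\eta_p)Q^l(\varphi)$ (using $\mu^{l}_n(\varphi)=\overline{\eta}_n Q^l(\varphi)=\tfrac1n\sum_p\eta_p Q^l(\varphi)$), and then to bound $\mathbb{E}\big[|A_n|/\mu^{l}_n(1)\big]$ and $\tfrac1{\sqrt N}\mathbb{E}\big[|V^{l,N}_n(\varphi)|/\mu^{l}_n(1)\big]$ separately, the first being $\mathcal{O}(\|\varphi\|/\sqrt N)$ and the second $\mathcal{O}(l\|\varphi\|/\sqrt N)$. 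The essential difficulty is that the crude bounds $\inf G\le\eta^N_p(G)\le\sup G$ control the multiplicative weights $\gamma^N_{p+k}(1)/\gamma^N_p(1)=\prod_{p\le j<p+k}\eta^N_j(G)$ only up to constants; handled naively this yields constants growing like $(\sup G/\inf G)^l$, and the semigroup structure must be exploited to keep the $l$-dependence linear.

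The enabling ingredient is a stability estimate for the unnormalised semigroup: under (A\ref{ass:smc}) there is a constant $C_0<\infty$, depending only on $C$ and $\theta$, such that
\[
Q^m(1)(x)\le C_0\,Q^m(1)(y)\qquad\text{for all }m\in\mathbb{N}_0,\ x,y\in\mathsf E .
\]
This follows by writing $Q^m(1)(x)=G(x)\int M(x,dz)\,Q^{m-1}(1)(z)$ and inserting the two-sided bound $\theta\beta(\cdot)\le M(x,\cdot)\le\theta^{-1}\beta(\cdot)$ together with $\sup_{x,y}G(x)/G(y)\le C$, so $C_0=C\theta^{-2}$ works (the cases $m\in\{0,1\}$ being immediate). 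Three consequences will be recorded: for any $\nu\in\mathcal P(\mathsf E)$, $\|Q^m(1)\|\le C_0\,\nu(Q^m(1))$; hence $\|Q^l(1)\|/\mu^{l}_n(1)\le C_0$ since $\mu^{l}_n(1)=\overline{\eta}_n Q^l(1)$; and, writing $Q^l(1)=Q^{l-k}(Q^k(1))$ and $Q^k(1)(z)\ge\inf Q^k(1)\ge\|Q^k(1)\|/C_0$,
\[
\mu^{l}_n(1)=\overline{\eta}_n Q^{l-k}\!\big(Q^k(1)\big)\ \ge\ \frac{\|Q^k(1)\|}{C_0}\,\overline{\eta}_n Q^{l-k}(1)\ \ge\ \frac{\|Q^k(1)\|\,\|Q^{l-k}(1)\|}{C_0^{2}},\qquad 1\le k\le l .
\]

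For $A_n$: by Minkowski and the time-uniform $\mathbb{L}_1$-estimate for particle approximations of Feynman--Kac flows valid under (A\ref{ass:smc}) (see \cite[Chapter~7]{delm}), one has $\mathbb{E}[|A_n|]\le\tfrac1n\sum_p \mathbb{E}[|(\eta^N_p-\eta_p)Q^l(\varphi)|]\le C\|Q^l(\varphi)\|/\sqrt N\le C\|\varphi\|\,\|Q^l(1)\|/\sqrt N$, and dividing by $\mu^{l}_n(1)$ with $\|Q^l(1)\|/\mu^{l}_n(1)\le C_0$ gives $\mathbb{E}[|A_n|/\mu^{l}_n(1)]\le C\|\varphi\|/\sqrt N$. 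For the fluctuation term, fix $p,k$: the weight $\gamma^N_{p+k}(1)/\gamma^N_p(1)$ is $\mathcal F_{p+k-1}$-measurable, and conditionally on $\mathcal F_{p+k-1}$ the field $V^N_{p+k}(\psi)$ is a $\sqrt N$-normalised sum of $N$ conditionally i.i.d.\ centred variables bounded by $2\|\psi\|$, so $\mathbb{E}[\,|V^N_{p+k}(\psi)|\mid\mathcal F_{p+k-1}]\le\|\psi\|$; with $\psi=Q^{l-k}(\varphi)$ this is $\le\|\varphi\|\,\|Q^{l-k}(1)\|$. Taking expectations, using \eqref{eq:cond_exp_gamma} in the form $\mathbb{E}[\gamma^N_{p+k}(1)/\gamma^N_p(1)]=\mathbb{E}[\eta^N_p Q^k(1)]\le\|Q^k(1)\|$, and then the displayed lower bound on $\mu^{l}_n(1)$,
\[
\frac{1}{\mu^{l}_n(1)}\,\mathbb{E}\!\left[\frac{\gamma^N_{p+k}(1)}{\gamma^N_p(1)}\,\big|V^N_{p+k}(Q^{l-k}(\varphi))\big|\right]\ \le\ \frac{\|\varphi\|\,\|Q^{l-k}(1)\|\,\|Q^k(1)\|}{\mu^{l}_n(1)}\ \le\ C_0^{2}\,\|\varphi\| .
\]
Summing over $k\in\{1,\dots,l\}$, averaging over $p$ and inserting the $1/\sqrt N$ prefactor bounds $\tfrac1{\sqrt N}\mathbb{E}[|V^{l,N}_n(\varphi)|/\mu^{l}_n(1)]$ by $C_0^2\,l\,\|\varphi\|/\sqrt N$; adding this to the bound for $A_n$ completes the proof. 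The one non-routine step is precisely the uniform-in-$l$ control of ratios such as $\|Q^{l-k}(1)\|\,\|Q^k(1)\|/\mu^{l}_n(1)$, which the semigroup stability estimate above delivers; everything else is bookkeeping.
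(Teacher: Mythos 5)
Your proof is correct and follows essentially the same route as the paper's: the decomposition of Lemma~\ref{lem:decomp} split into a drift term (controlled by the time-uniform $\mathbb{L}_1$ particle estimate) and a fluctuation term (controlled by the conditional i.i.d.\ structure of $V^N_{p+k}$), with the $l$-uniformity of the constants secured by the two-sided semigroup estimate $Q^m(1)(x)\leq C_0\,Q^m(1)(y)$. The only cosmetic differences are that you derive this estimate directly from (A\ref{ass:smc}) rather than invoking \cite[Lemma 4.1]{cerou} via \eqref{eq:eig_bound}, and you normalise by $\|Q^m(1)\|$ where the paper normalises by $\eta_p(Q^m(1))$; these are equivalent under the same estimate.
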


\begin{proof}
Using Lemma \ref{lem:decomp} and the fact that
$$
\mu^{l}_n(\varphi)=
\frac{1}{n}\sum_{p=0}^{n-1}\eta_{p}(Q^{l}(\varphi))
$$
we have that by the Minkowski inequality
$$
\mathbb{E}\left[\left|\frac{\mu^{l,N}_n(\varphi)-\mu^{l}_n(\varphi)}{\mu^{l}_n(1)}\right|\right]
\leq T_1 + T_2
$$
where 
\begin{eqnarray*}
T_1 & = & \frac{1}{n\mu^{l}_n(1)}\sum_{p=0}^{n-1}\mathbb{E}\left[\left|
\eta_p^{N}(Q^{l}(\varphi)) - \eta_{p}(Q^{l}(\varphi))\right|\right] \\
T_2 & = &  \frac{1}{n\mu^{l}_n(1)}
\sum_{p=0}^{n-1}\sum_{k=1}^l \mathbb{E}\left[\left|
\frac{\gamma^N_{p+k}(1)}{\gamma^N_{p}(1)}\left\{
\eta^N_{p+k}(Q^{l-k}(\varphi))-\Phi\left(\eta^N_{p+k-1}\right)(Q^{l-k}(\varphi))\right\}
\right|\right].
\end{eqnarray*}
We deal with $T_1$ and $T_2$ individually to conclude.

For $T_1$ one has
$$
T_1 =  \frac{1}{n\mu^{l}_n(1)}\sum_{p=0}^{n-1}\eta_{p}(Q^{l}(1))\mathbb{E}\left[\left|
\eta_p^{N}\left(\frac{Q^{l}(\varphi)}{\eta_{p}(Q^{l}(1))}\right) - \eta_{p}\left(\frac{Q^{l}(\varphi)}{\eta_{p}(Q^{l}(1))}\right)\right|\right].
$$
By using standard $\mathbb{L}_1-$bounds for Feynman-Kac formula (e.g.~\cite[Theorem 7.4.4.]{delm})  it follows that
$$
T_1 \leq  C\frac{1}{n\sqrt{N}\mu^{l}_n(1)}\sum_{p=0}^{n-1}\eta_{p}(Q^{l}(1))\left\|\frac{Q^{l}(\varphi)}{\eta_{p}(Q^{l}(1))}\right\|.
$$
By (A\ref{ass:smc}) one can show that for any $s\in\mathbb{N}_0$
\begin{equation}\label{eq:eig_bound}
\sup_{p\geq 0}\left\|\frac{Q^{s}(\varphi)}{\eta_{p}(Q^{s}(1))}\right\| \leq C\|\varphi\|
\end{equation}
where $C$ does not depend upon $s$; see \cite[Lemma 4.1]{cerou} for example. Therefore we have shown that
$$
T_1 \leq \frac{C\|\varphi\|}{\sqrt{N}}.
$$

For $T_2$ using the conditional i.i.d.~property of the particle system one has
$$
T_2 \leq   \frac{C}{n\sqrt{N}\mu^{l}_n(1)}
\sum_{p=0}^{n-1}\sum_{k=1}^l \eta_{p}(Q^{l-k}(1))
\left\|\frac{Q^{l-k}(\varphi)}{\eta_{p}(Q^{l-k}(1))}\right\|
\mathbb{E}\left[\frac{\gamma^N_{p+k}(1)}{\gamma^N_{p}(1)}\right]
$$
and then using \eqref{eq:eig_bound}
$$
T_2 \leq   \frac{C\|\varphi\|}{n\sqrt{N}\mu^{l}_n(1)}
\sum_{p=0}^{n-1}\sum_{k=1}^l \eta_{p}(Q^{l-k}(1))\mathbb{E}\left[\frac{\gamma^N_{p+k}(1)}{\gamma^N_{p}(1)}\right].
$$
Now 
$$
\mathbb{E}\left[\frac{\gamma^N_{p+k}(1)}{\gamma^N_{p}(1)}\right] = 
\eta_{p}(Q^{k}(1))\mathbb{E}\left[\eta_p^N\left(\frac{Q^k(1)}{\eta_{p}(Q^{k}(1))}\right)\right]
$$
so combining with \eqref{eq:eig_bound} yields
$$
T_2 \leq   \frac{C}{n\sqrt{N}\mu^{l}_n(1)}
\sum_{p=0}^{n-1}\sum_{k=1}^l \eta_{p}(Q^{l-k}(1))\eta_{p}(Q^{k}(1)).
$$
Now due to (A\ref{ass:smc}) there exist finite and positive constants $C,\bar{C}$ so that for any $(x,y)\in\mathsf{E}^2$
and any  $s\in\mathbb{N}_0$
$$
\bar{C} Q^s(1)(y) \leq Q^s(1)(x) \leq C Q^s(1)(y)
$$
so that it follows that there exists a finite $C$ that does not depend on $p,k,l$ or $\eta_p$ so that
$$
\frac{\eta_{p}(Q^{l-k}(1))\eta_{p}(Q^{k}(1))}{\eta_p(Q^l(1))} \leq C.
$$
Hence
$$
T_2 \leq   \frac{C\|\varphi\|l\mu^{l}_n(1)}{\sqrt{N}\mu^{l}_n(1)} = \frac{C\|\varphi\|l}{\sqrt{N}}
$$
and this concludes the proof.
\end{proof}

\begin{rem}
Extending Lemma \ref{tech:lem} and hence Theorem \ref{theo:l1} to $\mathbb{L}_q-$is challenging as
one has to control the moment
$$
\mathbb{E}\left[\left|\frac{\gamma^N_{p+k}(1)}{\gamma^N_{p}(1)}\right|^{q}\right]^{1/q}.
$$
This is particularly simple when $q=1$, but even the case $q=2$ is not trivial; see \cite{cerou} for instance.
\end{rem}

%\begin{rem}
%    \begin{equation*}
%    \begin{split}
%        \frac{(\pi\otimes K^{\otimes(l-1)}(\psi))^3}{\pi\otimes K^{\otimes(l-1)}(\varphi)}(\sigma^2 - \sigma_{\tau}^2) = & \left(\pi\otimes K^{\otimes  l}\right)\left(\left(\sum_{j=0}^{l-1} K^{\otimes(l-1-j)}(\hat{\varphi})\circ\pi_{j+1}\right)\left(\sum_{j=0}^{l-1} K^{\otimes(l-1-j)}(\hat{\psi})\circ\pi_{j+1}\right)\right) \\\
%        - &\left(\pi\otimes K^{\otimes (l-1)}\right)\left(\left(\sum_{j=0}^{l-1} K^{\otimes(l-j)}(\hat{\varphi})\circ\pi_j\right)\left(\sum_{j=0}^{l-1} K^{\otimes(l-j)}(\hat{\psi})\circ\pi_j\right)\right)
%    \end{split}
%    \end{equation*}
%\end{rem}

\end{document}